\renewcommand{\phi}{\varphi}
\renewcommand{\rho}{\varrho}
\renewcommand{\epsilon}{\varepsilon}
\tikzset{commutative diagrams/.cd}
\newtheorem{Def}{Definition}[section]
\newenvironment{definition}{\begin{Def} \rm}{\end{Def}}
\newtheorem{lemma}[Def]{Lemma}
\newtheorem{proposition}[Def]{Proposition}
\newtheorem{theorem}[Def]{Theorem}
\newtheorem{example}[Def]{Example}
\newcommand{\komma}{,\hspace{0.3em}}
\newcommand{\id}{\text{\rm id}}
\renewcommand{\leq}{\leqslant}
\renewcommand{\geq}{\geqslant}
\newcommand{\Naturals}{{\mathbb N}}
\newcommand{\Reals}{{\mathbb R}}
\newcommand{\Complexes}{{\mathbb C}}
\newcommand{\Quaternions}{{\mathbb H}}
\newcommand{\inv}{^\star}
\renewcommand{\c}{^\perp}
\newcommand{\herm}[2]{\left( #1 \,|\, #2 \right)}
\newcommand{\adj}{^\star}
\newcommand{\Hil}[1]{\mathcal{Hil}_{#1}}
\newcommand{\Her}[1]{\mathcal{Her}_{#1}}
\newcommand{\C}{{\mathcal C}}
\newcommand{\Cdm}{{\mathcal C}_\text{\rm dm}}
\newcommand{\E}{{\mathsf E}}
\newcommand{\op}{^\text{op}}
\DeclareMathOperator{\kernel}{ker}
\DeclareMathOperator{\image}{im}
\DeclareMathOperator{\homset}{Hom}
\begin{document}

\title{Axiomatising the dagger category \\ of complex Hilbert spaces}

\author[1]{Jan Paseka}

\author[2]{Thomas Vetterlein}

\affil[1]{\footnotesize Department of Mathematics and Statistics,
Masaryk University \authorcr
Kotl\'a\v rsk\'a 2, 611\,37 Brno, Czech Republic \authorcr
{\tt paseka@math.muni.cz}}

\affil[2]{\footnotesize Institute for Mathematical Methods in Medicine and Data Based Modeling, \authorcr
Johannes Kepler University Linz \authorcr
Altenberger Stra\ss{}e 69, 4040 Linz, Austria \authorcr
{\tt Thomas.Vetterlein@jku.at}}

\date{\today}

\maketitle

\begin{abstract}\parindent0pt\parskip1ex

\noindent We axiomatise the dagger category of complex Hilbert spaces and bounded linear maps, using exclusively purely categorical conditions. Our axioms are chosen with the aim of an easy interpretability: two of them describe the composition of objects, two further ones deal with the decomposition of objects, and a final axiom expresses a symmetry property.

The categorical reconstruction of complex Hilbert spaces addresses foundational issues in quantum physics. We present a simplified alternative to recent characterisations.

{\it Keywords:} complex Hilbert space; dagger category

{\it MSC:} 81P10; 18M40; 46C05

\mbox{}\vspace{-2ex}

\end{abstract}

\section{Introduction}
\label{sec:Introduction}

The complex Hilbert space plays a prominent role in quantum physics and in numerous branches of mathematics, most notably in harmonic analysis. However, it is not straightforward to explain precisely why this mathematical model is so useful and versatile. Therefore, it seems sensible to look for ways to justify the significance of this model. One might ask, in particular, whether the Hilbert space can be reconstructed from a framework that is easier to specify and to understand. We note that a Hilbert space may be recovered from the ortholattice of its closed subspaces \cite{Wil}, or from the orthoset of its one-dimensional subspaces \cite{Vet1,Vet2,Rum}. However, reducing the analytical model to a first-order structure does not necessarily lead to simpler axioms. The conditions are usually technically involved and difficult to motivate.

In this paper, we take up this issue and we adopt the categorical approach to the foundations of quantum mechanics, which was pioneered by Samson Abramsky and Bob Coecke \cite{AbCo}. Rather than considering single Hilbert spaces in isolation and describing their internal structure, the idea is to focus on their mutual relationship and disregard their inner structure. Alongside the standard categorical concepts, a further ingredient is added: the dagger \cite{AbCo,Sel}. The strength of this approach was demonstrated in the celebrated work of Chris Heunen and Andre Kornell, who succeeded in characterising Hilbert spaces in purely categorical terms \cite{HeKo}.

However, the physical interpretability of the postulated categorical properties has so far remained an unsolved issue. Elaborating on this point seems reasonable and we propose to this end an alternative axiomatic approach. We present in this paper a characterisation of the dagger category of complex Hilbert spaces and bounded linear maps between them, the dagger being the usual adjoint. The most notable difference with \cite{HeKo} is that we do not assume a monoidal structure, the division ring being determined in an alternative way. Instead, we assume the presence of a dagger simple object $I$ and the division ring is constructed from the endomorphisms of $I$.

Let us outline our hypotheses and indicate how they are meant to be understood. For the precise formulation, see the beginning of Section~\ref{sec:main}. We refer in what follows to a dagger category with a zero object.  There are two conditions regarding the {\it composition} of objects from given ones: (H1) postulates the existence of biproducts and (H2) implies the existence of colimits of directed diagrams in the wide subcategory of dagger monomorphisms. The next two conditions are related to the {\it decomposition} of objects: by (H3), any dagger monomorphism is part of a biproduct; and (H4) postulates the existence of a certain dagger simple object $I$, i.e., an object every non-zero dagger monomorphism into which is an isomorphism. $I$ is to be understood as the smallest, ``indecomposable'' non-zero object. The final condition concerns the symmetries of an object: according to (H5), any dagger automorphism is supposed to have strict square roots. In terms of Hilbert spaces, the meaning of the hypotheses might be obvious. The biproducts are the orthogonal sums, and the mentioned colimits are direct limits of an isometric direct system. Moreover, any Hilbert space is the orthogonal sum of a closed subspace and its orthocomplement; and the ``indecomposable'' object is of course a one-dimensional Hilbert space. Finally, any unitary operator $U$ in a complex Hilbert space has a strict square root $V$. This means that $V$ is a further unitary operator such that $V^2 = U$ and the reducing subspaces of $U$ and $V$ coincide.

Condition (H1) is part of Heunen and Kornell's axiom system as well, it is their condition (B). The first part of our (H2) is their (C). We do not assume dagger equalisers and (H3) replaces (E) and (K). Moreover, the role of the monoidal identity is taken over by our dagger simple $I$, thus (H4) replaces (T). Finally, we do not require $I$ to be a simple generator; but the second part of our (H2) is a related condition.

While writing this paper, we became aware of a further work that is based on similar ideas as ours. Stephen Lack and Shay Tobin's article \cite{LaTo} contains an axiomatisation of the dagger category of the classical Hilbert spaces and the approach likewise does not require a monoidal structure. A notable difference is that dagger equalisers and dagger kernels are in \cite{LaTo} assumed to exist.

Compared to either of the previous works, our objective is narrower: we actually characterise the Hilbert spaces over $\Complexes$. We rule out $\Reals$ and $\Quaternions$ by means by condition (H5), which consequently has no analogue in \cite{HeKo} or \cite{LaTo}. We note that the limitation to $\Complexes$ allows us to make convenient use of Bikchentaev's Theorem \cite{Bik}, according to which in any at least two-dimensional complex Hilbert space the collection of bounded operators is generated {\it as a ring} by the projections.

The paper is structured as follows. The necessary background information on dagger categories and Hermitian spaces is provided in Sections~\ref{sec:dagger-categories} and~\ref{sec:Hermitian-spaces}, respectively. In Section~\ref{sec:main} we present our list of axioms and we show that a dagger category fulfilling the conditions is unitarily dagger equivalent to the dagger category of complex Hilbert spaces.

\section{Dagger categories}
\label{sec:dagger-categories}

In this preliminary section, we compile basic notions and facts from category theory. For further details, we refer the reader, e.g., to Heunen and Vicary's monograph \cite{HeVi}.

A {\it dagger category} is a category equipped with an involutive contravariant endofunctor that is the identity on objects. Such an endofunctor is called a {\it dagger} and we denote the image of a morphism $f$ under it by $f\adj$.

Let $\C$ be a dagger category. A morphism $f \colon A \to B$ is called a {\it dagger monomorphism} if $f\adj \circ f = \id_A$, and a {\it dagger isomorphism} if $f\adj \circ f = \id_A$ and $f \circ f\adj = \id_B$.

Let us assume that $\C$ has a zero object $0$. We denote the morphism $A \to B$ factorising through $0$ by $0_{A,B}$. Note that ${0_{A,B}}\adj = 0_{B,A}$. Moreover, we call an object $I 
\in \C$ {\it dagger simple} if $I$ is non-zero and any non-zero dagger monomorphism to $I$ is a dagger isomorphism.

By a {\it dagger biproduct} of $A, B \in \C$, we mean a coproduct
\[ \begin{tikzcd} A \arrow[r, "\iota_A"] & A \oplus B & B \arrow[l, "\iota_B"'] \end{tikzcd} \]
such that $\iota_A, \iota_B$ are dagger monomorphisms and ${\iota_B}\adj \circ \iota_A = 0_{A,B}$. We will refer to $\iota_A, \iota_B$ as the {\it injections}, to ${\iota_A}\adj, {\iota_B}\adj$ as the {\it projections} of the dagger biproduct.

Similarly, we define the dagger biproduct of $n$ objects for any finite number $n$. In case when $n = 0$, the biproduct is understood to be $0$. In case when $n = 1$, the injection and the projection is a pair of mutually inverse dagger isomorphisms. The $n$-fold dagger biproduct of some $A \in \C$ is written $n \, A$. Up to dagger isomorphism, the dagger biproduct is associative. Moreover, the zero object behaves neutrally: $\begin{tikzcd}[cramped] A \arrow[r, "\id_A"] & A & 0 \arrow[l, "\;\;0_{0,A}"'] \end{tikzcd}$ is obviously a dagger biproduct. Hence we may assume that $A \oplus 0 = A$.

For the remainder of the section, let us suppose that $\C$ has dagger biproducts. This is to say that any pair, and hence any finite set of objects, has a dagger biproduct.

\begin{lemma} \label{lem:zero-in-biproduct}
Let $\begin{tikzcd}[cramped] A \arrow[r, "f"] & A \oplus B & B \arrow[l, "g"'] \end{tikzcd}$ be a dagger biproduct. If $f = 0$, then $g$ is a dagger isomorphism.
\end{lemma}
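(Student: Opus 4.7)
The plan is to exploit the coproduct universal property that a dagger biproduct carries. One half of the required identity is already at hand: $g\adj \circ g = \id_B$ holds because $g$ is a dagger monomorphism by the definition of a dagger biproduct. What remains is to establish $g \circ g\adj = \id_{A \oplus B}$.

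To do this I would invoke the uniqueness part of the coproduct universal property of the injections $f, g$: a morphism $\gamma \colon A \oplus B \to A \oplus B$ is uniquely determined by the pair $(\gamma \circ f, \gamma \circ g)$. It thus suffices to check that $g \circ g\adj$ and $\id_{A \oplus B}$ induce the same such pair. Precomposing with $g$, both morphisms yield $g$: the computation $g \circ g\adj \circ g = g \circ \id_B = g$ uses only that $g$ is a dagger monomorphism, while $\id_{A \oplus B} \circ g = g$ is immediate. Precomposing with $f$, both morphisms yield $0_{A, A \oplus B}$, since $f = 0$ by hypothesis gives $g \circ g\adj \circ f = 0$ and $\id_{A \oplus B} \circ f = f = 0$ directly. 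Uniqueness then forces $g \circ g\adj = \id_{A \oplus B}$, and combined with $g\adj \circ g = \id_B$ this says exactly that $g$ is a dagger isomorphism.

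The argument is essentially tautological, and I anticipate no real obstacle beyond unpacking the biproduct definition. It is worth noting that the biproduct compatibility condition $g\adj \circ f = 0_{A,B}$ is not even needed for this proof, since the vanishing of $f$ already makes both candidate morphisms vanish after precomposition with $f$; the only ingredients actually used are the coproduct universal property and the dagger-mono property of $g$.
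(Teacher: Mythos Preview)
Your proof is correct and matches the paper's approach exactly: the paper also verifies $g \circ g\adj \circ g = g$ and $g \circ g\adj \circ f = 0_{A, A \oplus B}$ and then concludes $g \circ g\adj = \id_{A \oplus B}$ via the coproduct universal property. Your observation that the condition $g\adj \circ f = 0_{A,B}$ is not needed is also accurate.
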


\begin{proof}
Assume $f = 0$. By definition of the biproduct, $g\adj \circ g = \id_B$. Moreover, $g \circ g\adj \circ g = g$ and $g \circ g\adj \circ f = 0_{A, A \oplus B}$, hence $g \circ g\adj = \id_{A \oplus B}$.
\end{proof}

For morphisms $f \colon A \to C$ and $g \colon B \to D$, we let $f \oplus g$ be the unique morphism making the diagram
\[ \begin{tikzcd}
C \arrow[r, "\iota_C"] & C \oplus D & D \arrow[l, "\iota_D"'] \\
A \arrow[r, "\iota_A"] \arrow[u, "f"] & A \oplus B  \arrow[u, "f \oplus g"] & B \arrow[l, "\iota_B"'] \arrow[u, "g"]
\end{tikzcd} \]
commutative. Similarly, we define the sum of any $k \geq 1$ morphisms. The sum of morphisms is again associative, and for any $f \colon A \to B$ we have $f \oplus 0_{0,0} = f$.

\begin{lemma} \label{lem:f-oplus-isomorphy}
Let $f_1 \colon A_1 \to B_1$ and $f_2 \colon A_2 \to B_2$ be morphisms. Then $(f_1 \oplus f_2)\adj = {f_1}\adj \oplus {f_2}\adj$.
\end{lemma}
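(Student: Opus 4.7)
The plan is to reduce everything to the defining coproduct universal properties of the two biproducts involved, combined with the dagger. Concretely, since $f_1\adj \oplus f_2\adj$ is, by the construction just given for sums of morphisms, the unique morphism $B_1 \oplus B_2 \to A_1 \oplus A_2$ satisfying $(f_1\adj \oplus f_2\adj) \circ \iota_{B_i} = \iota_{A_i} \circ f_i\adj$ for $i = 1, 2$, it suffices to check that $(f_1 \oplus f_2)\adj$ satisfies the very same equation, and then invoke uniqueness at the coproduct $B_1 \oplus B_2$.

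The key intermediate step I would prove first is the identity
\[
\iota_{B_i}\adj \circ (f_1 \oplus f_2) \;=\; f_i \circ \iota_{A_i}\adj,
\]
since taking the dagger of this yields exactly $(f_1 \oplus f_2)\adj \circ \iota_{B_i} = \iota_{A_i} \circ f_i\adj$. To establish the intermediate identity I would use the coproduct universal property of $A_1 \oplus A_2$, verifying that both sides agree after precomposition with $\iota_{A_k}$ for $k = 1, 2$. On the left, the defining equation $(f_1 \oplus f_2) \circ \iota_{A_k} = \iota_{B_k} \circ f_k$ turns the expression into $\iota_{B_i}\adj \circ \iota_{B_k} \circ f_k$, which collapses to $f_i$ when $k = i$ and to $0_{A_k, B_i}$ otherwise. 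On the right, $f_i \circ \iota_{A_i}\adj \circ \iota_{A_k}$ produces the same two values.

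The ``matrix entries'' $\iota_{X_i}\adj \circ \iota_{X_k} = \delta_{ik}\, \id_{X_i}$ used for $X \in \{A, B\}$ follow at once from the fact that the injections of a dagger biproduct are dagger monomorphisms, combined with the orthogonality axiom $\iota_{X_2}\adj \circ \iota_{X_1} = 0$ and its dagger $\iota_{X_1}\adj \circ \iota_{X_2} = 0$.

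I do not anticipate a genuine obstacle here: the whole argument is a formal manipulation with the dagger and two applications of the coproduct universal property, essentially the ``matrix calculus'' for dagger biproducts in its simplest form. The only mild point of care is to keep track of which biproduct's universal property is being invoked at each stage: the one at $A_1 \oplus A_2$ drives the auxiliary identity, while the one at $B_1 \oplus B_2$ delivers the final conclusion.
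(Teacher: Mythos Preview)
Your argument is correct: reducing to the coproduct universal property at $B_1 \oplus B_2$, then verifying the auxiliary identity $\iota_{B_i}\adj \circ (f_1 \oplus f_2) = f_i \circ \iota_{A_i}\adj$ via the coproduct property at $A_1 \oplus A_2$ together with the orthonormality relations $\iota_{X_i}\adj \circ \iota_{X_k} = \delta_{ik}\,\id$, is exactly the standard ``matrix calculus'' proof for dagger biproducts. The paper does not give its own argument here but defers to \cite[Chapter~2]{HeVi} and \cite[Lemma~5.2]{PaVe}; your self-contained proof is precisely the kind of short direct verification those references contain, so there is no substantive difference in approach.
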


\begin{proof}
This is shown in \cite[Chapter 2]{HeVi}. Alternatively, see \cite[Lemma~5.2]{PaVe} for a short direct proof.
\end{proof}

A {\it semiadditive structure} on $\C$ is a binary operation $+$ on $\homset(X,Y)$ for each $X, Y \in \C$, subject to the following conditions: (i) equipped with $+$ and $0_{X,Y}$, $\homset(X,Y)$ is a commutative monoid, and (ii) the composition of morphisms is left and right distributive over $+$.

We will write $\Delta_X$ for the diagonal morphism of an object $X$, and $\nabla_X$ for the codiagonal morphism of $X$.

\begin{theorem} \label{thm:semiadditive-structure}
$\C$ possesses a unique semiadditive structure, given by
\begin{equation} \label{fml:semiadditive-structure}
f+g \;=\; \nabla_Y \circ (f \oplus g) \circ \Delta_X
\end{equation}
for morphisms $f, g \colon X \to Y$.
\end{theorem}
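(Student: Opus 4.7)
My plan is to handle existence and uniqueness separately, and to do uniqueness first because it will pin down exactly what needs to be verified for existence.

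For uniqueness, assume $\boxplus$ is any semiadditive structure on $\C$. The pivotal computation is the ``resolution of the identity''
\[
\iota_1 \circ {\iota_1}\adj \;\boxplus\; \iota_2 \circ {\iota_2}\adj \;=\; \id_{X\oplus X},
\]
which I would prove by composing both sides with $\iota_1$ and $\iota_2$, using the dagger-biproduct identities ${\iota_i}\adj\circ\iota_i=\id$ and ${\iota_j}\adj\circ\iota_i=0_{X,X}$ for $i\ne j$, together with bilinearity and the zero-absorption in $\boxplus$, and then invoking uniqueness in the coproduct. Post-composing with $\nabla_Y$ (which by definition absorbs both $\iota_i$) then yields $\nabla_Y = {\iota_1}\adj\boxplus{\iota_2}\adj$. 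Now expand
\[
\nabla_Y\circ(f\oplus g)\circ \Delta_X \;=\; \bigl({\iota_1}\adj\circ(f\oplus g)\;\boxplus\;{\iota_2}\adj\circ(f\oplus g)\bigr)\circ \Delta_X,
\]
observe that ${\iota_i}\adj\circ(f\oplus g)=f_i\circ {\iota_i}\adj$ (one-line coproduct argument), and use ${\iota_i}\adj\circ\Delta_X=\id_X$ (which is how $\Delta_X$ is defined, via the product property obtained by dualising the coproduct with the dagger). The result is $f\boxplus g$, so formula (\ref{fml:semiadditive-structure}) is forced. This simultaneously produces the uniqueness and identifies the candidate.

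For existence I take (\ref{fml:semiadditive-structure}) as the definition of $+$ (with $\Delta_X$ constructed from the product universal property that the dagger hands us, and $\nabla_Y := {\Delta_Y}\adj$) and verify the axioms. The zero law $f+0_{X,Y}=f$ follows from the straightforward identity $(f\oplus 0_{X,Y})\circ\Delta_X = \iota_1\circ f$, checked by post-composing with the projections ${\iota_i}\adj$. Commutativity uses the swap $\sigma:X\oplus X\to X\oplus X$ determined by $\sigma\iota_1=\iota_2$, $\sigma\iota_2=\iota_1$, noting that $\sigma\circ\Delta_X=\Delta_X$, $\nabla_Y\circ\sigma=\nabla_Y$ and $\sigma\circ(f\oplus g)=(g\oplus f)\circ\sigma$. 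Bilinearity of composition rests on the two ``naturality'' identities $\Delta_X\circ k=(k\oplus k)\circ \Delta_W$ and $h\circ\nabla_Y=\nabla_Z\circ(h\oplus h)$, each verified by a one-step projection/injection computation, combined with $(f\oplus g)\circ(k\oplus k)=(f\circ k)\oplus(g\circ k)$ from Lemma~\ref{lem:f-oplus-isomorphy}'s companion computation. Associativity is handled by passing to the triple biproduct $Y\oplus Y\oplus Y$ and expressing $(f+g)+h$ and $f+(g+h)$ both as $\nabla_Y^{(3)}\circ(f\oplus g\oplus h)\circ\Delta_X^{(3)}$ for the evident threefold diagonal and codiagonal, using associativity of $\oplus$.

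The step I expect to be mildly fiddly rather than deep is the assembly for associativity, since it requires a careful definition of the threefold diagonal/codiagonal and the verification that iterated binary $\nabla_Y\circ(\nabla_Y\oplus\id_Y)$ equals $\nabla_Y^{(3)}$ (and symmetrically on the other side); everything else reduces to one-line uses of the coproduct or product universal property together with the orthogonality relations among the $\iota_i$. Zero-morphism absorption, implicitly needed several times, is guaranteed since any morphism factoring through $0$ is absorbed by composition on either side, and $0_{A,B}\adj=0_{B,A}$ as recorded before the lemma.
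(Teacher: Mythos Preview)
Your proposal is correct and is precisely the standard argument for this well-known fact; the paper itself does not give a proof but simply cites \cite[Proposition~40.12]{HeSt}, whose content is what you have spelled out. Nothing needs to be changed.
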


\begin{proof}
See \cite[Proposition~40.12]{HeSt}.
\end{proof}

For morphisms $f, g \colon X \to Y$ in a category with biproducts, $f+g$ is specified by means of the following diagram, in which every triangle and every square of the following diagram commutes:
\begin{equation} \label{fml:addition}
\begin{tikzcd}
& X \arrow[dl, "\id"', shift right=0.5ex] \arrow[d, dashed, "\Delta_X"] \arrow[dr, "\id", shift left=0.5ex] \\
X \arrow[r, "\iota"', shift right=0.5ex] \arrow[d, "f"]
& X \oplus X \arrow[l, "\iota\adj"', shift right=0.5ex] \arrow[r, "\iota\adj", shift left=0.5ex] \arrow[d, dashed, "f \oplus g"]
& X \arrow[l, "\iota", shift left=0.5ex] \arrow[d, "g"] \\
Y \arrow[r, "\iota"'] \arrow[dr, "\id"', shift right=0.5ex]
& Y \oplus Y \arrow[d, dashed, "\nabla_Y"]
& Y \arrow[l, "\iota"] \arrow[dl, "\id", shift left=0.5ex] \\
& Y
\end{tikzcd}
\end{equation}

\begin{lemma} \label{lem:plus-oplus-2}
Consider the dagger biproduct $\begin{tikzcd}[cramped] A \arrow[r, "\iota_A"] & A \oplus B & B \arrow[l, "\iota_B"'] \end{tikzcd}$. Then
\[ \iota_A \circ {\iota_A}\adj \;+\; \iota_B \circ {\iota_B}\adj \;=\; \id_{A \oplus B}. \]
\end{lemma}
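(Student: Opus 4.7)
My plan is to exploit the universal property of the coproduct $A \oplus B$: it suffices to show that the two sides of the asserted identity agree after precomposition with each of $\iota_A$ and $\iota_B$, since then uniqueness of the coproduct factorisation forces them to be equal.

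Concretely, write $e = \iota_A \circ \iota_A\adj + \iota_B \circ \iota_B\adj$. Applying Theorem~\ref{thm:semiadditive-structure}, which in particular guarantees that composition is right-distributive over $+$, I compute
\[
e \circ \iota_A \;=\; \iota_A \circ (\iota_A\adj \circ \iota_A) \;+\; \iota_B \circ (\iota_B\adj \circ \iota_A).
\]
By the defining properties of a dagger biproduct, $\iota_A\adj \circ \iota_A = \id_A$ and $\iota_B\adj \circ \iota_A = 0_{A,B}$. Hence the first summand is $\iota_A$ and the second is $0_{A, A \oplus B}$, so that $e \circ \iota_A = \iota_A$ by neutrality of $0$ in the monoid $\homset(A, A\oplus B)$. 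By the symmetric computation, $e \circ \iota_B = \iota_B$.

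Since $\id_{A \oplus B}$ obviously satisfies $\id_{A \oplus B} \circ \iota_A = \iota_A$ and $\id_{A \oplus B} \circ \iota_B = \iota_B$, the uniqueness clause of the coproduct universal property applied to the pair $(\iota_A, \iota_B)$ gives $e = \id_{A \oplus B}$.

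I do not anticipate any substantial obstacle; the only subtlety is to make sure that the semiadditive structure from Theorem~\ref{thm:semiadditive-structure} is invoked so that the right-distributivity $(f+g)\circ h = f\circ h + g\circ h$ and the fact that $0$ is the neutral element are both available. Everything else is an immediate consequence of the four biproduct relations $\iota_A\adj\circ\iota_A = \id_A$, $\iota_B\adj\circ\iota_B = \id_B$, $\iota_A\adj\circ\iota_B = 0$, $\iota_B\adj\circ\iota_A = 0$ together with the universal property of $A \oplus B$ as a coproduct.
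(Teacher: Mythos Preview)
Your proof is correct and follows essentially the same approach as the paper: the paper's proof is the one-line remark ``Pre-composition with $\iota_A$ leads on both sides to $\iota_A$, and similarly for $\iota_B$,'' which is precisely your argument spelled out in detail.
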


\begin{proof}
Pre-composition with $\iota_A$ leads on both sides to $\iota_A$, and similarly for $\iota_B$.
\end{proof}

\begin{lemma} \label{lem:dagger-of-sum}
For any morphisms $f, g \colon X \to Y$, we have $(f+g)\adj = f\adj + g\adj$.
\end{lemma}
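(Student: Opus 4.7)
The plan is to use the explicit formula \eqref{fml:semiadditive-structure} from Theorem~\ref{thm:semiadditive-structure} together with Lemma~\ref{lem:f-oplus-isomorphy}, and to reduce the statement to the identities $\Delta_X\adj = \nabla_X$ and $\nabla_Y\adj = \Delta_Y$.

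More precisely, starting from $f + g = \nabla_Y \circ (f \oplus g) \circ \Delta_X$ and applying the contravariant dagger, I obtain
\[
(f+g)\adj \;=\; \Delta_X\adj \,\circ\, (f \oplus g)\adj \,\circ\, \nabla_Y\adj,
\]
and Lemma~\ref{lem:f-oplus-isomorphy} rewrites the middle factor as $f\adj \oplus g\adj$. It then suffices to verify that $\Delta_X\adj = \nabla_X$ and $\nabla_Y\adj = \Delta_Y$, for then the right-hand side equals $\nabla_X \circ (f\adj \oplus g\adj) \circ \Delta_Y$, which is precisely $f\adj + g\adj$ by~\eqref{fml:semiadditive-structure}.

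To check $\nabla_X\adj = \Delta_X$, I use the universal characterisations. Writing $\iota_1, \iota_2 \colon X \to X \oplus X$ for the injections of the dagger biproduct, the codiagonal is defined by $\nabla_X \circ \iota_1 = \nabla_X \circ \iota_2 = \id_X$. Taking daggers gives $\iota_1\adj \circ \nabla_X\adj = \iota_2\adj \circ \nabla_X\adj = \id_X$. Since the diagonal $\Delta_X$ is uniquely determined by the analogous property with respect to the projections $\iota_1\adj, \iota_2\adj$, we conclude $\nabla_X\adj = \Delta_X$, and by involutivity $\Delta_X\adj = \nabla_X$. The same applies to $Y$.

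I do not expect a real obstacle here: the argument is a two-line calculation once the identity between the dagger of the (co)diagonal and the (co)diagonal has been recorded. The only point requiring a little care is making that identity explicit, since it is not stated as a separate lemma in the excerpt; but it follows immediately from the universal property of dagger biproducts.
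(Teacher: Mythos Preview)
Your argument is correct and is essentially identical to the paper's proof: both apply the dagger to the formula $f+g=\nabla_Y\circ(f\oplus g)\circ\Delta_X$, invoke Lemma~\ref{lem:f-oplus-isomorphy} for the middle factor, and use $\Delta_X\adj=\nabla_X$, $\nabla_Y\adj=\Delta_Y$. The only difference is that you spell out the verification of these last identities, whereas the paper uses them tacitly.
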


\begin{proof}
We have
\[ \begin{split} (f+g)\adj & \;=\; (\nabla_Y \circ (f \oplus g) \circ \Delta_X)\adj
\;=\; {\Delta_X}\adj \circ (f \oplus g)\adj \circ {\nabla_Y}\adj \\
& \;=\; \nabla_X \circ (f\adj \oplus g\adj) \circ \Delta_Y
\;=\; f\adj + g\adj \end{split} \]
by Lemma \ref{lem:f-oplus-isomorphy}.
\end{proof}

The following additional types of morphisms will be considered. By a {\it projection}, we mean a selfadjoint, idempotent endomorphism. Let $f \colon X \to X$ be a dagger automorphism. Then we call a further dagger automorphism $g \colon X \to X$ a {\it strict square root} of $f$ if $g^2 = f$ and for any projection $p$ of $X$, $p \circ f = f \circ p$ if and only if $p \circ g = g \circ p$.

Finally, the common notions around functors between categories are adapted to the present setting in the straightforward manner \cite{HeVi}. Specifically, a {\it dagger functor} is a functor $\mathcal F \colon \C_1 \to \C_2$ between dagger categories that also preserves the dagger. Furthermore, $\mathcal F$ is called {\it dagger essentially surjective} if each object in $\C_2$ is dagger isomorphic to ${\mathcal F}(X)$ for some object $X$ in $\C_1$.

Moreover, a dagger functor $\mathcal F \colon \C_1 \to \C_2$ establishes a {\it unitary dagger equivalence} if $\mathcal F$ is part of an adjoint equivalence of categories such that the unit and counit natural transformations are dagger isomorphisms at every stage \cite{Vic}. This is the case if and only if $\mathcal F$ is full, faithful, and dagger essentially surjective \cite[Lemma~V.1]{Vic}.

\section{Hilbert spaces and their generalisation}
\label{sec:Hermitian-spaces}

In this further preliminary section, we provide the necessary background information on inner-product spaces.

By a {\it $\star$-sfield}, we mean a division ring $\mathbb F$ equipped with an involutive antiautomorphism $\inv \colon \mathbb F \to \mathbb F$. The {\it classical} $\star$-sfields are $\Reals$, $\Complexes$, and $\Quaternions$ equipped with the identity or the standard conjugation, respectively. A {\it Hermitian space} is a linear space $\mathfrak H$ over a $\star$-sfield equipped with a sesquilinear form $\herm{\cdot}{\cdot}$ such that $\herm{u}{v} = \herm{v}{u}\inv$ for any $u, v \in \mathfrak H$ and $\herm{u}{u} = 0$ only if $u = 0$.

Let $\mathfrak H$ be a Hermitian space. We call $u \in \mathfrak H$ a {\it unit vector} if $\herm u u = 1$, and $\mathfrak H$ is called {\it uniform} if each $1$-dimensional subspace of $\mathfrak H$ contains a unit vector. By an {\it orthonormal basis} $B$ of $\mathfrak H$, we mean a set $B$ of mutually orthogonal unit vectors of $\mathfrak H$ such that $B\c = \{0\}$. We note that if $\mathfrak H$ is finite-dimensional, $\mathfrak H$ is in this case the linear hull of $B$, that is, $B$ is an algebraic basis.

For $C \subseteq \mathfrak H$, we denote by $C\c$ the orthocomplement of $C$. A subspace of $\mathfrak H$ that is the orthocomplement of some $C \subseteq \mathfrak H$ is called {\it orthoclosed}. We note that every finite-dimensional subspace has this property. We write $\mathfrak H = \mathfrak M_1 \oplus \mathfrak M_2$ if $\mathfrak H$ is the direct sum of the orthogonal orthoclosed subspaces $\mathfrak M_1$ and $\mathfrak M_2$. A subspace $\mathfrak M$ is called {\it splitting} if $\mathfrak H = \mathfrak M \oplus {\mathfrak M}\c$. We refer to $\mathfrak H$ as an {\it orthomodular space} if every orthoclosed subspace is splitting.

Let $T \colon \mathfrak H_1 \to \mathfrak H_2$ be a linear map between Hermitian spaces. An {\it adjoint} of $T$ is a linear map $S \colon \mathfrak H_2 \to \mathfrak H_1$ such that $\herm{T(u)}{v} = \herm{u}{S(v)}$ for any $u \in \mathfrak H_1$ and $v \in \mathfrak H_2$. We call $T$ {\it adjointable} if $T$ possesses an adjoint, which in this case is unique and denoted by $T\adj$.

By an {\it isometry} between Hermitian spaces, we mean an adjointable map $U \colon \mathfrak H_1 \to \mathfrak H_2$ such that $U\adj \circ U = \id_{\mathfrak H_1}$. In other words, the linear map $U \colon \mathfrak H_1 \to \mathfrak H_2$ is an isometry if $U$ is adjointable and preserves the Hermitian form. Note that in this case $\mathfrak H_2 = \image U \oplus \kernel U\adj$. A bijective isometry is called a {\it unitary map}.

\begin{example}
1. Any $\star$-sfield $\mathbb F$ gives rise to the dagger category $\Her{\mathbb F}$ as follows. The objects of $\Her{\mathbb F}$ are the Hermitian spaces $\mathfrak H$ over $\mathbb F$, the morphisms are the adjointable linear maps between them, and the dagger is the adjoint.

2. Let $\mathbb F$ be one of the three classical $\star$-sfields, that is, $\Reals$, $\Complexes$, or $\Quaternions$ equipped with their respective canonical antiautomorphism. Let $\Hil{\mathbb F}$ be the dagger category whose objects are the Hilbert spaces over $\mathbb F$, whose morphisms are the bounded linear maps between them, and whose dagger is the usual adjoint.
\end{example}

Our aim is a characterisation of $\Hil{\Complexes}$. We will make use of its properties listed in the following proposition.

\begin{proposition} \label{prop:Hilbert-spaces-fulfil-axioms}
Let $\mathbb F$ be a classical $\star$-sfield. A map $T \colon \mathfrak H_1 \to \mathfrak H_2$ in $\Hil{\mathbb F}$ is a monomorphisms iff $T$ is injective, and an isomorphism iff $T$ is bijective. Moreover, $T$ is a dagger monomorphism iff $T$ is an isometry, and a dagger isomorphism iff $T$ is unitary.

Moreover, $\Hil{\mathbb F}$ has the following properties:
\begin{itemize}

\item[\rm (i)] The orthogonal sum of two Hilbert spaces in $\Hil{\mathbb F}$, together with the inclusion maps, is a dagger biproduct.

\item[\rm (ii)] Let $\mathbf D = \big(\mathfrak H_i \xlongrightarrow[]{T_{i \leq j}} \mathfrak H_j\big)_{i,j \in K}$ be a directed diagram of isometries in $\Hil{\mathbb F}$. Then $\mathbf D$ has in $(\Hil{\mathbb F})_\text{\rm dm}$ a colimit $\big(\mathfrak H_i \xlongrightarrow[]{U_i} \mathfrak H\big)_{i \in K}$ and the isometries $(U_i)_{i \in K}$ are jointly epic.

\item[\rm (iii)] Let $\mathfrak M$ be a closed subspace of a Hilbert space $\mathfrak H$. Then $\mathfrak H$ is the orthogonal sum of $\mathfrak M$ and ${\mathfrak M}\c$.

\item[\rm (iv)] Let $\mathfrak I \in \Hil{\mathbb F}$ be a one-dimensional Hilbert space. Then $\mathfrak I$ is dagger simple. Moreover, we have:

\begin{itemize}

\item[\rm (a)] For any non-zero Hilbert space $\mathfrak H$, there is a non-zero linear map $\mathfrak I \to \mathfrak H$.

\item[\rm (b)] For any non-zero linear map $T \colon \mathfrak I \to \mathfrak H$, there is an $\alpha \neq 0$ in the centre of $\mathbb F$ such that $\alpha T$ is an isometry.

\end{itemize}

\end{itemize}
If $\mathbb F = \Complexes$, we moreover have:

\begin{itemize}

\item[\rm (v)] Any unitary map $U \colon \mathfrak H \to \mathfrak H$ has a strict square root.

\end{itemize}
\end{proposition}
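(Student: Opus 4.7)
The statements up to and including (iv) are all standard parts of the theory of Hilbert spaces, so for these I would mostly cite textbook facts. For the morphism characterisations, I would note that a bounded linear map $T$ is a categorical monomorphism in $\Hil{\mathbb F}$ exactly when it is set-theoretically injective (factor through $\ker T$ for non-trivial direction); an isomorphism precisely when it is bijective, which uses the open mapping theorem to guarantee boundedness of the inverse; $T^\adj\circ T=\id$ is equivalent to $T$ preserving the inner product by polarisation; and $T\adj\circ T=\id,\ T\circ T\adj=\id$ is the usual definition of a unitary.

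For part (i), I would just verify that the algebraic coproduct $\mathfrak H_1\oplus\mathfrak H_2$ endowed with the natural inner product satisfies the categorical coproduct property in $\Hil{\mathbb F}$, and that the inclusions are isometries with ${\iota_{\mathfrak H_2}}\adj\circ\iota_{\mathfrak H_1}=0$. For part (ii), I would construct the colimit of the directed system $(T_{i\leq j})$ as the completion of the algebraic direct limit $\varinjlim \mathfrak H_i$, which inherits a well-defined inner product because the connecting maps are isometric. The canonical maps $U_i$ are isometries whose images jointly have dense union, giving both the universal property in the subcategory of isometries and joint epicity. Part (iii) is the projection theorem. For (iv), if $\mathfrak I$ is one-dimensional then any non-zero isometry into $\mathfrak I$ has a one-dimensional injective image, hence is surjective and therefore a dagger isomorphism; (a) is immediate by mapping a unit vector of $\mathfrak I$ to any non-zero vector of $\mathfrak H$; and for (b), given non-zero $T\colon\mathfrak I\to\mathfrak H$ and a unit vector $e\in\mathfrak I$, the scalar $\alpha = \|T(e)\|^{-1}$ lies in the centre of $\mathbb F$ (being real and positive) and $\alpha T$ is an isometry.

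The substantive step is (v), and the plan is to use the Borel functional calculus for the normal operator $U$ on the complex Hilbert space $\mathfrak H$. Let $E$ be the spectral measure of $U$, which is supported on the unit circle $\mathbb T\subseteq\Complexes$. Choose a Borel measurable branch of the square root on $\mathbb T$, for instance $f(e^{i\theta})=e^{i\theta/2}$ for $\theta\in[0,2\pi)$, and set $V = f(U) = \int_{\mathbb T} f\,dE$. Then $V$ is unitary with $V^2=U$, since $f(z)^2=z$ on $\mathbb T$.

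It remains to verify the commutation condition on projections, which is where the crucial use of $\Complexes$ appears: since $U$ is normal, the von Neumann algebra $W^\ast(U)$ generated by $U$ is abelian, and $V\in W^\ast(U)$ by construction. A projection $p$ commuting with $U$ automatically commutes with $U\adj$ (take adjoints using $p\adj=p$), hence with every polynomial in $U$ and $U\adj$, hence with the norm closure, and finally with the SOT-closure $W^\ast(U)$ by separate SOT-continuity of multiplication; in particular $p$ commutes with $V$. Conversely, if $p$ commutes with $V$, then $p$ commutes with $V^2 = U$. The main obstacle is precisely this last argument: one has to set up just enough spectral/von Neumann algebra machinery to conclude that $V$ lies in $W^\ast(U)$ and that the projections commuting with $U$ form a subalgebra commuting with all of $W^\ast(U)$.
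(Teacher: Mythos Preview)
Your proposal is correct and follows essentially the same approach as the paper. The paper's proof is extremely terse: it declares the initial assertions and parts (i), (iii), (iv) ``clear'', sketches (ii) exactly as you do (algebraic direct limit, transport the form, complete, use density for joint epicity), and for (v) simply invokes Borel functional calculus with a citation; your argument via $V=f(U)\in W^\ast(U)$ and the bicommutant-style commutation reasoning is precisely the content behind that citation.
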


\begin{proof}
The initial assertions and parts (i), (iii), (iv) are clear.

Ad (ii): Let $\big(\mathfrak H_i \xlongrightarrow[]{U_i} {\mathfrak H}_\infty \big)_{i \in K}$ be the direct limit of the linear spaces $\mathfrak H_i$, $i \in K$; endow $\mathfrak H_\infty$ with the sesquilinear form of the $\mathfrak H_i$ via the maps $U_i$, $i \in K$; and let $\mathfrak H$ be the metric completion of $\mathfrak H_\infty$. Then it is readily seen that $\big(\mathfrak H_i \xlongrightarrow[]{U_i} {\mathfrak H}\big)_{i \in K}$ is a colimit of $\mathbf D$ in the wide subcategory of isometries. Moreover, if $S, T \colon \mathfrak H \to \mathfrak H'$ are morphisms of $\Hil{\mathbb F}$ such that $S \circ U_i = T \circ U_i$ for all $i$, then $S = T$ because $\mathfrak H_\infty$ is dense in $\mathfrak H$.

Ad (v): Assume that $\mathbb F = \Complexes$. The assertion follows from an application of Borel function calculus, see {\rm \cite[Lemma~4.5]{PaVe}}.
\end{proof}

\section{Dagger categories of Hilbert spaces}
\label{sec:main}

We present in this section a characterisation of $\Hil{\Complexes}$, the dagger category of complex Hilbert spaces and bounded linear maps.

Throughout this section, $\C$ will denote a dagger category with a zero object. We assume that $\C$ has the following properties:
\begin{itemize}

\item[\rm (H1)] $\C$ has dagger biproducts.

\item[\rm (H2)] The wide subcategory $\Cdm$ of dagger monomorphisms has directed colimits and the legs of any directed colimit cocone in $\Cdm$ are in $\C$ jointly epic.

\item[\rm (H3)] For every dagger monomorphism $f \colon A \to X$, there is a further dagger monomorphism $g \colon B \to X$ such that $\begin{tikzcd} A \arrow[r, "f"] & X & B \arrow[l, "g"'] \end{tikzcd}$ is a dagger biproduct.

\item[\rm (H4)]
$\C$ contains a dagger simple object. In addition, the following holds:

\begin{itemize}

\item[\rm (a)] For any dagger simple object $I$ and non-zero object $A$ in $C$, there is a non-zero morphism $u \colon I \to A$.

\item[\rm (b)] For any non-zero morphism $u \colon I \to A$ from a dagger simple object $I$ to an object $A \in \C$, there is an automorphism $h$ of $I$ such that $u \circ h$ is a dagger monomorphism.

\end{itemize}

\item[\rm (H5)] Every dagger automorphism has a strict square root.
\end{itemize}

Here, condition (H2) is understood in accordance with Proposition~\ref{prop:Hilbert-spaces-fulfil-axioms}(ii). Explicitly, let $\mathbf D = \big(A_i \xlongrightarrow[]{k_{i \leq j}} A_j\big)_{i,j \in K}$ be a directed diagram of dagger monomorphisms. According to (H2), $\mathbf D$ then has in $\Cdm$ a colimit $\big(A_i \xlongrightarrow[]{h_i} X\big)_{i \in K}$ and any two morphisms $f, g \colon X \to Y$ in $\C$ such that $f \circ h_i = g \circ h_i$ for all $i$ coincide.

By Proposition~\ref{prop:Hilbert-spaces-fulfil-axioms}, the axioms (H1)-(H5) are fulfilled by $\Hil{\Complexes}$. Our aim is to show that $\C$ is unitarily dagger equivalent to $\Hil{\Complexes}$.

\begin{lemma} \label{lem:l:unique-atomic-object}
\begin{itemize}

\item[\rm (i)] Every non-zero morphism between dagger simple objects is an isomorphism.

\item[\rm (ii)] $\C$ contains a unique dagger simple object up to dagger isomorphism.

\end{itemize}
\end{lemma}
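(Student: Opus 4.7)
The plan is to deduce both parts almost directly from hypothesis (H4), combined with the definition of dagger simplicity.

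For part (i), I would let $f\colon I \to J$ be a non-zero morphism between dagger simple objects. Applying (H4)(b) to $f$, I obtain an automorphism $h$ of $I$ such that $f \circ h$ is a dagger monomorphism from $I$ to $J$. Since $h$ is an isomorphism and $f \neq 0$, the composite $f \circ h$ is non-zero (otherwise $f = (f \circ h) \circ h^{-1} = 0$). Because $J$ is dagger simple and $f \circ h$ is a non-zero dagger monomorphism into $J$, the definition of dagger simplicity forces $f \circ h$ to be a dagger isomorphism, hence in particular an isomorphism. Then $f = (f \circ h) \circ h^{-1}$ is an isomorphism as well.

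For part (ii), hypothesis (H4) already supplies at least one dagger simple object, call it $I$. Given any further dagger simple object $J$, note that $J$ is non-zero by definition, so (H4)(a) yields a non-zero morphism $u\colon I \to J$. Applying (H4)(b) to $u$ produces an automorphism $h$ of $I$ such that $u \circ h$ is a dagger monomorphism, and the same non-vanishing argument as in (i) shows $u \circ h \neq 0$. Dagger simplicity of $J$ then promotes $u \circ h$ to a dagger isomorphism $I \to J$, establishing uniqueness of the dagger simple object up to dagger isomorphism.

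I do not anticipate any genuine obstacle: the argument is purely a bookkeeping exercise in the definitions, and the only subtle point is to keep track of the distinction between ordinary isomorphism (claim (i)) and dagger isomorphism (claim (ii)). The automorphism $h$ provided by (H4)(b) is what bridges the gap between "non-zero morphism" and "dagger monomorphism", after which dagger simplicity of the codomain closes the argument in both cases.
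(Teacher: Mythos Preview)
Your proposal is correct and follows essentially the same argument as the paper: apply (H4)(b) to turn a non-zero morphism into a dagger monomorphism, then invoke dagger simplicity of the codomain to upgrade it to a dagger isomorphism. You even spell out the non-vanishing of $f\circ h$ and the factorisation $f=(f\circ h)\circ h^{-1}$ more explicitly than the paper does.
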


\begin{proof}
Ad (i): Let $u \colon I \to J$ be a non-zero morphism between dagger simple objects. By (H4)(b), there is an automorphism $h \colon I \to I$ such that $u \circ h$ is a dagger monomorphism. As $J$ is dagger simple, $u \circ h$ is a dagger isomorphism. Hence $u$ is an isomorphism.

Ad (ii): Let $I$ and $J$ be dagger simple objects. By (H4)(a), there is non-zero morphism $u \colon I \to J$. As in part (i), 
there is an automorphism $h \colon I \to I$ such that $u \circ h$ is a dagger isomorphism. 
%
\end{proof}

In the sequel, $I$ will always denote a dagger simple object. We will endow the symbol $I$ with subscripts whenever the distinction of different copies of $I$ is necessary.

\begin{lemma} \label{lem:nI}
The objects $0$, $I$, and $I \oplus I$ are pairwise non-isomorphic.
\end{lemma}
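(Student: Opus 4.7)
The plan is to verify the three pairwise non-isomorphisms separately; only the inequality $I \not\cong I \oplus I$ requires more than a line.

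For the two assertions involving $0$: since $I$ is dagger simple, by definition $I$ is non-zero, so $\id_I \neq 0_{I,I}$, hence $I \not\cong 0$. Analogously, if $I \oplus I$ were a zero object then the injection $\iota_1 \colon I \to I \oplus I$ would factor through $0$ and hence be the zero morphism, giving the contradiction $\id_I = \iota_1\adj \circ \iota_1 = 0_{I,I}$.

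For the nontrivial inequality $I \not\cong I \oplus I$, the preliminary step is to observe that every non-zero endomorphism $u$ of $I$ is an isomorphism. Indeed, axiom (H4)(b) furnishes an automorphism $h$ of $I$ such that $u \circ h$ is a dagger monomorphism; since $u \circ h \neq 0$ and $I$ is dagger simple, $u \circ h$ is a dagger isomorphism, whence $u = (u \circ h) \circ h^{-1}$ is an isomorphism. On the other hand, the object $I \oplus I$ carries two non-zero endomorphisms whose composition is zero: letting $\iota_1, \iota_2$ be the injections and $p_i = \iota_i \circ \iota_i\adj$, the identity $\iota_1\adj \circ \iota_2 = 0$ yields $p_1 \circ p_2 = \iota_1 \circ (\iota_1\adj \circ \iota_2) \circ \iota_2\adj = 0$, while $p_i \circ \iota_i = \iota_i \neq 0$ forces $p_i \neq 0$.

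Combining these, suppose toward a contradiction that $\phi \colon I \to I \oplus I$ is an isomorphism. Then $\phi^{-1} \circ p_1 \circ \phi$ and $\phi^{-1} \circ p_2 \circ \phi$ are two non-zero endomorphisms of $I$ whose composition equals $\phi^{-1} \circ (p_1 \circ p_2) \circ \phi = 0_{I,I}$. But by the preliminary step both factors are isomorphisms, so their composition is an isomorphism and in particular non-zero -- a contradiction. This is essentially the categorical shadow of the fact that the $2 \times 2$ matrix ring over a division ring has non-trivial zero divisors, and the only substantive input is (H4)(b) together with dagger simplicity; I do not anticipate a genuine obstacle.
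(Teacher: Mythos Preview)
Your proof is correct and rests on the same core observation as the paper's: every non-zero endomorphism of $I$ is invertible (the paper has this as Lemma~\ref{lem:l:unique-atomic-object}(i), which you re-derive), whereas $I \oplus I$ supports non-zero morphisms with zero composite. The executions differ slightly: the paper transports the dagger biproduct structure to $I$ and uses that the two injections $f,g$ would be isomorphisms with $g\adj \circ f = 0$; you instead exhibit zero-divisors $p_1,p_2 \in \C(I\oplus I,\,I\oplus I)$ and conjugate by an arbitrary isomorphism $\phi$. Your variant has the minor advantage that it handles a bare isomorphism $\phi$ directly, whereas the paper's phrasing tacitly assumes the isomorphism may be taken to be a dagger isomorphism (this is justified by (H4)(b), but is not spelled out there).
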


\begin{proof}
By assumption, $0$ and $I$ are not isomorphic. Moreover, there is no dagger biproduct $\begin{tikzcd}[cramped] I \arrow[r, "f"] & 0 & I \arrow[l, "g"'] \end{tikzcd}$ and hence $I \oplus I$ is not isomorphic to $0$. Finally, assume that $\begin{tikzcd}[cramped] I \arrow[r, "f"] & I & I \arrow[l, "g"'] \end{tikzcd}$ is a dagger biproduct. Then $f$ and $g$ are non-zero and hence by Lemma~\ref{lem:l:unique-atomic-object} isomorphisms. But then also $g\adj \circ f$ is an isomorphism and hence non-zero, a contradiction. We conclude that $I \oplus I$ is not isomorphic to $I$.
\end{proof}

By Theorem \ref{thm:semiadditive-structure}, $\C$ possesses a unique semiadditive structure.

\begin{theorem} \label{thm:CII-is-field-K}
$({\C}(I,I); +, 0_{I,I}, \circ, \id_I)$ is a $\star$-sfield.
\end{theorem}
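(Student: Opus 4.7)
The ring axioms are essentially given to us. By Theorem~\ref{thm:semiadditive-structure}, $(\C(I,I), +, 0_{I,I})$ is a commutative monoid with composition distributing over $+$ on both sides; combined with the obvious monoid structure $(\C(I,I), \circ, \id_I)$, this provides all the ring data except additive inverses. Lemma~\ref{lem:l:unique-atomic-object}(i) instantly supplies multiplicative inverses for every non-zero element of $\C(I,I)$, since every such element is an endomorphism of the dagger simple object $I$ and is therefore an isomorphism. The dagger restricts to a map $\adj \colon \C(I,I) \to \C(I,I)$ that is involutive and anti-multiplicative by the definition of a dagger category, additive by Lemma~\ref{lem:dagger-of-sum}, and fixes $\id_I$ and $0_{I,I}$, so once the division ring is in hand the $\star$-structure comes for free.

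The only non-trivial point is additive inverses, which the semiadditive structure does not provide on its own. My plan is to extract them from (H5) applied to the swap involution on $I \oplus I$. Let $\iota_1, \iota_2 \colon I \to I \oplus I$ be the injections of a dagger biproduct and set $\sigma := \iota_1 \circ {\iota_2}\adj + \iota_2 \circ {\iota_1}\adj$. Using ${\iota_i}\adj \circ \iota_j = \id_I$ when $i = j$ and $0_{I,I}$ otherwise, one checks directly that $\sigma\adj = \sigma$ and $\sigma^2 = \id_{I \oplus I}$, so $\sigma$ is a dagger automorphism. By (H5) it has a strict square root $\tau$ with $\tau^2 = \sigma$.

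I then pass to the $2 \times 2$ matrix representation of $\homset(I \oplus I, I \oplus I)$ over $\C(I,I)$, in which composition becomes matrix multiplication and the dagger becomes conjugate transpose, and write $\tau$ as the matrix with entries $a, b, c, d \in \C(I,I)$. Reading off $\tau^2 = \sigma$ yields in particular the three equations $a^2 + bc = 0_{I,I}$, $d^2 + cb = 0_{I,I}$, and $ab + bd = \id_I$. Suppose, towards a contradiction, that $\C(I,I)$ were zero-sum-free, meaning $x + y = 0_{I,I}$ forced $x = y = 0_{I,I}$. Non-zero elements of $\C(I,I)$ are invertible by the previous step, hence non-zero-divisors, so the first two equations would force $a = d = 0_{I,I}$, collapsing the third to $0_{I,I} = \id_I$, which contradicts Lemma~\ref{lem:nI}. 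Therefore there are $x, y \in \C(I,I)$ with $x \neq 0_{I,I}$ and $x + y = 0_{I,I}$; for any $z \in \C(I,I)$ distributivity then gives $z + z \circ x^{-1} \circ y = z \circ x^{-1} \circ (x + y) = 0_{I,I}$, so $z \circ x^{-1} \circ y$ serves as an additive inverse of $z$. This upgrades $(\C(I,I), +)$ to an abelian group, completing the sfield structure. The main obstacle I anticipate is precisely this additive-inverse step: one must handle the matrix algebra carefully inside a semiring-with-dagger and resist importing subtraction before it has been constructed.
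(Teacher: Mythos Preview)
Your argument is correct. The semiring and dagger parts are routine, and your additive-inverse step via (H5) goes through: the swap $\sigma$ is indeed a dagger automorphism by Lemma~\ref{lem:plus-oplus-2}; the matrix calculus for $\homset(I\oplus I,I\oplus I)$ is available once you have Lemma~\ref{lem:plus-oplus-2} and the (co)product universal properties, and it requires no subtraction; and in a division semiring the equations $a^2+bc=0$, $cb+d^2=0$, $ab+bd=\id_I$ cannot all hold if the semiring is zero-sum-free, since $a^2=0$ forces $a=0$ (a non-zero element would be invertible). The final distributivity trick then yields an additive inverse for every element.

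The paper takes a different route for the same step and it is worth noting the difference. Instead of (H5), the paper uses (H3) and (H4)(a): it normalises the diagonal $\Delta_I\colon I\to I\oplus I$ to a dagger monomorphism via (H4)(b), completes it to a dagger biproduct by (H3), and then uses (H4)(a) to produce a non-zero morphism $k\colon I\oplus I\to I$ that kills $\Delta_I$; writing $k_i=k\circ\iota_i$ one gets $k_1+k_2=k\circ\Delta_I=0$ with $k_1,k_2\neq 0$. The advantage of the paper's approach is modularity: the $\star$-sfield structure is obtained from (H1)--(H4) alone, so the argument would transfer verbatim to axiomatisations of $\Hil{\Reals}$ or $\Hil{\Quaternions}$, and (H5) is reserved solely for its intended purpose of singling out $\Complexes$ at the very end. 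Your approach is a little shorter and avoids invoking (H3) and (H4)(a) here, but it ties the existence of additive inverses to (H5), which is a stronger and more specialised hypothesis than is really needed at this stage.
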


\begin{proof}
By Lemma~\ref{lem:l:unique-atomic-object}(i), every endomorphism of $I$ is either an automorphism or the zero map. It follows that ${\C}(I,I)$ is a division semiring. We shall show that there are non-zero $k_1, k_2 \in {\C}(I,I)$ such that $k_1 + k_2 = 0$. It will then be clear that ${\C}(I,I)$ has additive inverses, that is, ${\C}(I,I)$ is a division ring (cf.~\cite[4.34]{Gol}).

We consider the biproduct $\begin{tikzcd}[cramped] I \arrow[r, "\iota_1"] & I \oplus I & I \arrow[l, "\iota_2"'] \end{tikzcd}$. As $\Delta_I \neq 0$, there is by (H4)(b) an automorphism $h \colon I \to I$ such that $\Delta_I \circ h$ is a dagger monomorphism. By (H3), there is a dagger biproduct $\begin{tikzcd}[cramped] J \arrow[r, "f"] & I \oplus I & I \arrow[l, "\Delta_I \circ h"'] \end{tikzcd}$. By Lemma~\ref{lem:nI}, $\Delta_I \circ h$ is not an isomorphism and hence, by Lemma \ref{lem:zero-in-biproduct}, $J \neq 0$. By (H4)(a), there is hence a non-zero morphism $g \colon I \to J$. Let $k = g\adj \circ f\adj$. Then $k \colon I \oplus I \to I $ is a non-zero morphism such that $k \circ \Delta_I = 0$.

We put $k_1 = k \circ \iota_1$ and $k_2 = k \circ \iota_2$. Note that $k_1$ and $k_2$ cannot be both $0$.
\begin{equation} \label{fml:CII-is-field-K}
\begin{tikzcd}
& I    \\
I \arrow[ur, "k_1", shift left=0.5ex] \arrow[r, "\iota_1"] 
& I \oplus I  \arrow[u, dashed, "k"]
& I \arrow[l, "\iota_2"']\arrow[ul, "k_2"', shift right=0.5ex] \\
\end{tikzcd}
\quad\quad\quad
\begin{tikzcd}
& I \arrow[dl, "\id"', shift right=0.5ex] \arrow[d, dashed, "\Delta_I"] \arrow[dr, "\id", shift left=0.5ex] \\
I \arrow[d, "k_1"] \arrow[r, "\iota_1"', shift right=0.5ex]
& I \oplus I  \arrow[d, dashed, "k_1 \oplus k_2"]
\arrow[l, "{\iota_1}\adj"', shift right=0.5ex] \arrow[r, "{\iota_2}\adj", shift left=0.5ex]
& I  \arrow[l, "\iota_2", shift left=0.5ex]\arrow[d, "k_2"] \\
I \arrow[dr, "\id"', shift right=0.5ex] \arrow[r, "\iota_1"']
& I \oplus I \arrow[d, dashed, "\nabla_I"]
& I \arrow[dl, "\id", shift left=0.5ex] \arrow[l, "\iota_2"] \\
& I
\end{tikzcd}
\end{equation}
From the commutative diagram on the right side of (\ref{fml:CII-is-field-K}) we observe that
\begin{align*}
\nabla_I \circ (k_1\oplus k_2)\circ \iota_1
& \;=\; \nabla_I \circ \iota_1 \circ k_1 \;=\; \id_I \circ k_1 \;=\; k_1, \\
\nabla_I \circ (k_1\oplus k_2) \circ \iota_2
& \;=\; \nabla_I \circ \iota_2 \circ k_2 \;=\; \id_I \circ k_2  \;=\; k_2
\end{align*}
and hence $k = \nabla_I  \circ (k_1 \oplus k_2)$. We conclude that $k_1 + k_2 = \nabla_I \circ (k_1 \oplus k_2) \circ \Delta_I = k \circ \Delta_I = 0$. As one of $k_1$ and $k_2$ is non-zero, they are actually both non-zero.

In view of Lemma \ref{lem:dagger-of-sum}, we finally observe that $^\star$ is an involutive antiautomorphism of ${\C}(I,I)$.
\end{proof}

\begin{definition} \label{def:linear-structure-H}
Let $\mathbb F = \C(I,I)\op$. We denote the multiplication in $\mathbb F$ by $\cdot$, that is, we put $\alpha \cdot \beta = \beta \circ \alpha$.

Moreover, for $X \in \C$, let ${\mathcal V}(X) = \homset(I,X)$. Equip ${\mathcal V}(X)$ with the addition $+$ and the constant $0 = 0_{I,X}$. For $\alpha \in \mathbb F$ and $u \in {\mathcal V}(X)$, let $\alpha \cdot u = u \circ \alpha$. For $u, v \in {\mathcal V}(X)$, let
\[ \herm u v \;=\; v\adj \circ u. \]
Finally, for a morphism $f \colon X \to Y$ in $\C$, let
\begin{equation} \label{fml:Vf}
{\mathcal V}(f) \colon {\mathcal V}(X) \to {\mathcal V}(Y) \komma u \mapsto f \circ u.
\end{equation}
\end{definition}

\begin{theorem} \label{thm:Hermitian-structure-H}
Let $X \in \C$. Equipped with the addition, scalar multiplication, and the inner product according to Definition \ref{def:linear-structure-H}, ${\mathcal V}(X)$ is a Hermitian space over $\mathbb F$.

Moreover, for any morphism $f \colon X \to Y$, ${\mathcal V}(f)$ is an adjointable linear map from ${\mathcal V}(X)$ to ${\mathcal V}(Y)$. In fact, $\mathcal V$ is a dagger functor from $\mathcal C$ to $\Her{\mathbb F}$, which also preserves the addition.
\end{theorem}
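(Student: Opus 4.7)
The plan is to verify the claims in sequence, leaning throughout on the semiadditive structure from Theorem~\ref{thm:semiadditive-structure} and the $\star$-sfield structure from Theorem~\ref{thm:CII-is-field-K}.

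First, I would check that ${\mathcal V}(X) = \homset(I,X)$ is a linear space over $\mathbb F$. The semiadditive structure of $\C$ already turns $\homset(I,X)$ into a commutative monoid under $+$ with neutral element $0_{I,X}$. Setting $\alpha \cdot u = u \circ \alpha$ gives a scalar action: the identity and bilinearity laws follow directly from left/right distributivity of $\circ$ over $+$ (Theorem~\ref{thm:semiadditive-structure}), and the associativity $(\alpha \cdot \beta) \cdot u = \alpha \cdot (\beta \cdot u)$ is built into the choice $\mathbb F = \C(I,I)\op$. Additive inverses in ${\mathcal V}(X)$ are inherited from those in $\mathbb F$: if $\beta \in \C(I,I)$ satisfies $\id_I + \beta = 0_{I,I}$ (such a $\beta$ exists by Theorem~\ref{thm:CII-is-field-K}), then $u + u \circ \beta = u \circ (\id_I + \beta) = 0_{I,X}$ by right distributivity.

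Next, I would verify the Hermitian form. Conjugate symmetry is immediate: $\herm{u}{v}\inv = (v\adj \circ u)\adj = u\adj \circ v = \herm{v}{u}$. Sesquilinearity splits into (i) additivity in each slot, which follows from left/right distributivity applied to $w\adj \circ (u+v)$ and $(u+v)\adj \circ w$; and (ii) the scalar identities $\herm{\alpha \cdot u}{v} = \alpha \cdot \herm{u}{v}$ and $\herm{u}{\alpha \cdot v} = \herm{u}{v} \cdot \alpha\adj$, which are straightforward compositional rewrites once one remembers that multiplication in $\mathbb F$ is opposite to composition in $\C(I,I)$. The essential nontrivial step is anisotropy: if $u \neq 0$, then by (H4)(b) there is an automorphism $h$ of $I$ such that $u \circ h$ is a dagger monomorphism, so $h\adj \circ (u\adj \circ u) \circ h = (u \circ h)\adj \circ (u \circ h) = \id_I \neq 0$, which forces $\herm{u}{u} = u\adj \circ u \neq 0$. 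This is the one step where one really uses a categorical hypothesis beyond bookkeeping, and it is the main (though mild) obstacle.

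For the functorial part, the map ${\mathcal V}(f) \colon u \mapsto f \circ u$ clearly satisfies ${\mathcal V}(\id_X) = \id_{{\mathcal V}(X)}$ and ${\mathcal V}(g \circ f) = {\mathcal V}(g) \circ {\mathcal V}(f)$ by associativity of $\circ$. Linearity of ${\mathcal V}(f)$ reduces to left distributivity for additivity and to associativity $f \circ (u \circ \alpha) = (f \circ u) \circ \alpha$ for $\mathbb F$-homogeneity. Preservation of addition, ${\mathcal V}(f + g) = {\mathcal V}(f) + {\mathcal V}(g)$, follows from right distributivity: $(f+g) \circ u = f \circ u + g \circ u$. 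Finally, to see that ${\mathcal V}(f\adj)$ is an adjoint of ${\mathcal V}(f)$, I would compute both sides of the adjunction identity and observe they are equal:
\[
\herm{{\mathcal V}(f)(u)}{v} \;=\; v\adj \circ f \circ u \;=\; (f\adj \circ v)\adj \circ u \;=\; \herm{u}{{\mathcal V}(f\adj)(v)}.
\]
This shows that ${\mathcal V}(f)$ is adjointable with ${\mathcal V}(f)\adj = {\mathcal V}(f\adj)$, completing the proof that ${\mathcal V}$ is a dagger functor $\C \to \Her{\mathbb F}$ preserving the additive structure on hom-sets.
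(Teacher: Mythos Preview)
Your proof is correct and follows essentially the same route as the paper's own argument: both verify the linear-space axioms via distributivity of composition over $+$, obtain additive inverses by pulling back $-1 \in \mathbb F$, establish anisotropy via (H4)(b), and compute the adjoint of ${\mathcal V}(f)$ exactly as you do. The only point worth flagging is that additivity of $\herm{\cdot}{\cdot}$ in the second slot tacitly uses $(u+v)\adj = u\adj + v\adj$ (Lemma~\ref{lem:dagger-of-sum}); the paper cites this explicitly, while you fold it into ``left/right distributivity''.
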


\begin{proof}
By Theorem \ref{thm:CII-is-field-K}, $\mathbb F$ is a $\star$-sfield. We moreover know that $({\mathcal V}(X); +, 0)$ is a commutative monoid. For any $u \in {\mathcal V}(X)$, $(-1) \cdot u$ is an additive inverse of $u$, hence $({\mathcal V}(X); +, 0)$ is in fact an abelian group. It is finally evident that the multiplication $(\alpha, u) \mapsto \alpha \cdot u$ makes ${\mathcal V}(X)$ into a linear space.

We have to show that $\herm{\cdot}{\cdot}$ is a Hermitian form. Using the fact that composition distributes from both sides over the addition and that, by Lemma~\ref{lem:dagger-of-sum}, the addition is compatible with the dagger, we easily check that $\herm{\cdot}{\cdot}$ is sesquilinear. Clearly, $\herm{v}{u} = \herm{u}{v}\adj$ for $u, v \in {\mathcal V}(X)$.

Furthermore, $\herm{u}{u} = 0$ for some $u \in {\mathcal V}(X)$ means $u\adj \circ u = 0_{I,I}$. Assume that $u \neq 0$. By (H4)(b), there is an automorphism $h \colon I \to I$ such that $u \circ h$ is a dagger monomorphism. But then $\id_I = (u \circ h)\adj \circ u \circ h = 0_{I,I}$, a contradiction. This shows that $\herm{\cdot}{\cdot}$ is anisotropic.

For a morphism $f \colon X \to Y$, we readily check that ${\mathcal V}(f) \colon {\mathcal V}(X) \to {\mathcal V}(Y)$ is a linear map. The functoriality of $\mathcal V$ is immediate. Moreover, $\herm{{\mathcal V}(f)(u)}{v} = \herm{f \circ u}{v} = v\adj \circ f \circ u = \herm{u}{f\adj \circ v} = \herm{u}{{\mathcal V}(f\adj)(v)}$ and hence ${\mathcal V}(f\adj) = {\mathcal V}(f)\adj$. We conclude that $\mathcal V \colon \mathcal C \to \Her{\mathbb F}$ is a dagger functor.

Finally, let $f, g \colon X \to Y$. For $u \in {\mathcal V}(X)$, we have ${\mathcal V}(f+g)(u) = (f+g) \circ u = f \circ u + g \circ u = {\mathcal V}(f)(u) + {\mathcal V}(g)(u) = ({\mathcal V}(f) + {\mathcal V}(g))(u)$. Hence ${\mathcal V}(f+g) = {\mathcal V}(f) + {\mathcal V}(g)$.
\end{proof}

\begin{lemma} \label{lem:uniformity}
Let $X \in \C$.
\begin{itemize}

\item[\rm (i)] Any $u \in {\mathcal V}(X) \setminus \{0\}$ is a monomorphism. Moreover, $u$ is a unit vector if and only if $u$ is a dagger monomorphism.

\item[\rm (ii)] ${\mathcal V}(X)$ is uniform.

\item[\rm (iii)] ${\mathcal V}(X)$ has an orthonormal basis.
\end{itemize}
\end{lemma}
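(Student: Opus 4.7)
The proof strategy is that part (i) does the real work, relying directly on (H4)(b), and then parts (ii) and (iii) follow by routine Hermitian-space reasoning translated into the categorical setting.

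For part (i), given a non-zero $u \in \mathcal V(X) = \homset(I,X)$, I would apply (H4)(b) to produce an automorphism $h$ of $I$ such that $u \circ h$ is a dagger monomorphism. Any dagger monomorphism $m$ satisfies $m\adj \circ m = \id$, so $m$ is a split monomorphism and in particular a monomorphism. Hence $u = (u \circ h) \circ h^{-1}$ is a composite of monomorphisms (since $h$ is an isomorphism), and therefore a monomorphism. The second assertion is simply unwinding definitions: $u$ is a unit vector in $\mathcal V(X)$ iff $\herm u u = 1$, i.e.\ $u\adj \circ u = \id_I$, which is exactly the condition that $u$ be a dagger monomorphism.

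For part (ii), let $u \in \mathcal V(X)$ be non-zero. By (H4)(b) there is an automorphism $h$ of $I$ such that $u \circ h$ is a dagger monomorphism, hence by (i) a unit vector. Under the convention $\alpha \cdot u = u \circ \alpha$ of Definition \ref{def:linear-structure-H}, the element $h$, viewed as a scalar in $\mathbb F = \C(I,I)\op$, satisfies $h \cdot u = u \circ h$. Thus the one-dimensional subspace $\mathbb F \cdot u$ contains the unit vector $h \cdot u$, and $\mathcal V(X)$ is uniform.

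For part (iii), I would apply Zorn's Lemma to the collection of orthonormal subsets of $\mathcal V(X)$ ordered by inclusion; the union of a chain is again orthonormal because orthonormality is finitary, so a maximal orthonormal set $B$ exists. To see $B\c = \{0\}$, suppose otherwise and pick $0 \neq u \in B\c$. Since $B\c$ is a subspace (the relation $\herm{u \circ \alpha}{b} = b\adj \circ u \circ \alpha$ is zero whenever $b\adj \circ u = 0$), the one-dimensional subspace $\mathbb F \cdot u$ lies in $B\c$, and by (ii) contains a unit vector $v$. Then $B \cup \{v\}$ is a strictly larger orthonormal set, contradicting maximality of $B$. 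The only subtle step is recognising that (H4)(b) furnishes the normalising scalar in part (i); everything else is bookkeeping.
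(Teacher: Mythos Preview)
Your proof is correct and follows essentially the same approach as the paper's: part~(i) via (H4)(b), part~(ii) by interpreting the normalising automorphism as a scalar, and part~(iii) by Zorn's Lemma. The only cosmetic difference is that the paper first picks a maximal set of pairwise orthogonal non-zero vectors and then normalises using part~(ii), whereas you work with orthonormal sets from the start; both arguments are equivalent.
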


\begin{proof}
Ad (i): $u$ is a monomorphism by (H4)(b). Moreover, $\herm u u = 1$ if and only if $u\adj \circ u = \id_I$.

Ad (ii): Let $u \in {\mathcal V}(X) \setminus \{0\}$. By (H4)(b), there is an $\alpha \in \mathbb F$ such that $\alpha \cdot u = u \circ \alpha$ is a dagger monomorphism. By part (i), $\alpha \cdot u$ is a unit vector.

Ad (iii): By Zorn's Lemma, we may choose a maximal set $B = \{ e_\kappa \colon \kappa \in \Lambda \}$ consisting of pairwise orthogonal non-zero vectors in ${\mathcal V}(X)$. Then $B\c = \{0\}$ and the assertion follows from part (ii).
\end{proof}

\begin{lemma} \label{lem:dagger-mono}
Let $X \in \C$ and $I \xlongrightarrow[]{e_1} X, \ldots, I \xlongrightarrow[]{e_n} X$, where $n \geq 1$. Then $e_1, \ldots, e_n$ are pairwise orthogonal unit vectors of \/ ${\mathcal V}(X)$ if and only if
\[ [e_1, \ldots, e_n] \colon n \, I \to X \]
is a dagger monomorphism.
\end{lemma}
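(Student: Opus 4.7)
Write $f = [e_1, \ldots, e_n] \colon nI \to X$ and let $\iota_1, \ldots, \iota_n \colon I \to nI$ denote the injections of the $n$-fold dagger biproduct, so that $\iota_j\adj \circ \iota_k = \id_I$ if $j = k$ and $\iota_j\adj \circ \iota_k = 0_{I,I}$ otherwise. The morphism $f$ is characterised by the equations $f \circ \iota_k = e_k$ for $k = 1, \ldots, n$. Taking daggers yields $\iota_k\adj \circ f\adj = e_k\adj$.

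The plan is to reduce the statement to a matrix calculation using the universal property of the biproduct and Lemma~\ref{lem:plus-oplus-2}. First I observe that the $(j,k)$-entry of $f\adj \circ f$ is
\[ \iota_j\adj \circ f\adj \circ f \circ \iota_k \;=\; e_j\adj \circ e_k \;=\; \herm{e_k}{e_j}, \]
while the $(j,k)$-entry of $\id_{nI}$ is $\iota_j\adj \circ \iota_k$, which is $\id_I$ for $j = k$ and $0_{I,I}$ otherwise. So if the $e_k$ are pairwise orthogonal unit vectors, then the matrix entries of $f\adj \circ f$ and $\id_{nI}$ agree; conversely, if $f\adj \circ f = \id_{nI}$, then $\herm{e_k}{e_j} = \delta_{jk}$, so the $e_k$ form an orthonormal family.

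The remaining ingredient is the familiar fact that a morphism $nI \to nI$ is determined by its matrix entries. Iterating Lemma~\ref{lem:plus-oplus-2} gives $\sum_{j=1}^n \iota_j \circ \iota_j\adj = \id_{nI}$, and hence for any $g \colon nI \to nI$ one has
\[ g \;=\; \sum_{j,k=1}^n \iota_j \circ (\iota_j\adj \circ g \circ \iota_k) \circ \iota_k\adj, \]
by distributivity of composition over $+$ (Theorem~\ref{thm:semiadditive-structure}). Applied to $g = f\adj \circ f$ and $g = \id_{nI}$, this shows the two morphisms coincide if and only if all their matrix entries coincide, completing the equivalence.

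I do not anticipate any real obstacle; the argument is a routine matrix calculation in a dagger category with biproducts. The only mild care needed is to keep track of the direction of the dagger when expressing $f\adj$ through the projections $\iota_k\adj$, and to invoke the semiadditive structure from Theorem~\ref{thm:semiadditive-structure} to justify expanding $\id_{nI}$ as a sum via Lemma~\ref{lem:plus-oplus-2}.
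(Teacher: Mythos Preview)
Your proposal is correct and follows essentially the same approach as the paper: both verify that the matrix entries $\iota_j\adj \circ f\adj \circ f \circ \iota_k = e_j\adj \circ e_k$ agree with those of $\id_{nI}$ precisely when the $e_k$ are orthonormal. The only variation is cosmetic---you justify that matrix entries determine the morphism via the semiadditive expansion of Lemma~\ref{lem:plus-oplus-2}, while the paper uses the product universal property directly, identifying $[e_1,\ldots,e_n]\adj$ with the pairing $(e_1\adj,\ldots,e_n\adj)$ and checking components.
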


\begin{proof}
The case $n = 1$ is clear from Lemma~\ref{lem:uniformity}(i). We will show the assertion for $n = 2$; the case $n \geq 3$ works similarly.

Assume that $e_1, e_2$ are orthogonal unit vectors. By Lemma~\ref{lem:uniformity}(i), $e_1$ and $e_2$ are dagger monomorphisms. From the commutative diagram

\[ \begin{tikzcd}[column sep = 2.5cm]
I
& I \oplus I \arrow[l, "{\iota_1}\adj"'] \arrow[r, "{\iota_2}\adj"]
& I \\
& X \arrow[ul, "{e_1}\adj"] \arrow[u, dashed, "{({e_1}\adj,{e_2}\adj)}"' {yshift=3pt}] \arrow[ur, "{e_2}\adj"'] \\
I \arrow[uu, "\id"] \arrow[r, "\iota_1"'] \arrow[ur, "e_1"]
& I \oplus I \arrow[u, dashed, "{[e_1,e_2]}"' {yshift=-2pt}]
& I \arrow[uu, "\id"] \arrow[l, "\iota_2"]  \arrow[ul, "e_2"']
\end{tikzcd} \]
we observe that $({e_1}\adj,{e_2}\adj) = [e_1,e_2]\adj$ and $[e_1,e_2]\adj \circ [e_1,e_2] = ({e_1}\adj,{e_2}\adj) \circ [e_1,e_2] = \id_{I \oplus I}$. Hence $[e_1, e_2]$ is a dagger monomorphism.

Conversely, assume that $[e_1,e_2] \colon I \oplus I \to X$ is a dagger monomorphism. Then $e_1 = [e_1,e_2] \circ \iota_1$ and $e_2 = [e_1,e_2] \circ \iota_2$ imply ${e_1}\adj \circ e_2 = 0_{I,I} = 0$ and ${e_1}\adj \circ e_1 = {e_2}\adj \circ e_2 = \id_I = 1$.
\end{proof}

\begin{lemma} \label{lem:ONB}
Let $X \in \C$ and $I \xlongrightarrow[]{e_1} X, \ldots, I \xlongrightarrow[]{e_n} X$, where $n \geq 0$. Then $\{ e_1, \ldots, e_n \}$ is an orthonormal basis of \/ ${\mathcal V}(X)$ if and only if $X = n \, I$ via $e_1, \ldots, e_n$.

In this case, we have
\begin{equation} \label{fml:ONB}
u \;=\; ({e_1}\adj \circ u) \cdot e_1 \;+\; \ldots \;+\; ({e_n}\adj \circ u) \cdot e_n
\end{equation}
for any $u \in {\mathcal V}(X)$.
\end{lemma}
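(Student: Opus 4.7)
My plan is to prove the two directions by leveraging Lemma~\ref{lem:dagger-mono} on the map $[e_1,\ldots,e_n]\colon n\,I\to X$, noting that $X=n\,I$ via $e_1,\ldots,e_n$ is by definition the statement that $[e_1,\ldots,e_n]$ is a dagger isomorphism (equivalently, $e_1,\ldots,e_n$ are the injections of a dagger biproduct structure on $X$). The edge case $n=0$ reduces to saying that $X=0$ if and only if $\mathcal V(X)=\{0\}$, which is immediate from (H4)(a).

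For the forward direction, suppose $X=n\,I$ via $e_1,\ldots,e_n$. Since $[e_1,\ldots,e_n]$ is in particular a dagger monomorphism, Lemma~\ref{lem:dagger-mono} gives that $e_1,\ldots,e_n$ are pairwise orthogonal unit vectors. The key observation is that the $n$-fold analogue of Lemma~\ref{lem:plus-oplus-2} applied to the biproduct realised by the $e_i$ yields
\[
\id_X \;=\; e_1\circ {e_1}\adj \;+\;\ldots\;+\; e_n\circ {e_n}\adj.
\]
Composing on the right with an arbitrary $u\in\mathcal V(X)$ and using that composition distributes over $+$, we obtain
\[
u \;=\; e_1\circ({e_1}\adj\circ u)\;+\;\ldots\;+\;e_n\circ({e_n}\adj\circ u)
\;=\; ({e_1}\adj\circ u)\cdot e_1\;+\;\ldots\;+\;({e_n}\adj\circ u)\cdot e_n,
\]
which is the expansion formula (\ref{fml:ONB}). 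In particular, if $u$ is orthogonal to every $e_i$, i.e.\ ${e_i}\adj\circ u=0$ for all $i$, then $u=0$; hence $\{e_1,\ldots,e_n\}\c=\{0\}$ and $\{e_1,\ldots,e_n\}$ is an orthonormal basis of $\mathcal V(X)$.

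For the converse, suppose $\{e_1,\ldots,e_n\}$ is an orthonormal basis of $\mathcal V(X)$. By Lemma~\ref{lem:dagger-mono}, $[e_1,\ldots,e_n]\colon n\,I\to X$ is a dagger monomorphism. Apply (H3) to obtain a further dagger monomorphism $g\colon B\to X$ such that
\[
\begin{tikzcd} n\,I \arrow[r, "{[e_1,\ldots,e_n]}"] & X & B \arrow[l, "g"'] \end{tikzcd}
\]
is a dagger biproduct. It remains to show $B=0$, for then Lemma~\ref{lem:zero-in-biproduct} (or the neutrality of $0$) forces $[e_1,\ldots,e_n]$ to be a dagger isomorphism. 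Assume for contradiction that $B\neq 0$. Then (H4)(a) furnishes a non-zero morphism $w\colon I\to B$, and $u\coloneqq g\circ w\in\mathcal V(X)$ is non-zero because $g$ is (dagger-)monic. By the defining property of the dagger biproduct, $[e_1,\ldots,e_n]\adj\circ g=0$, so for every $i$,
\[
{e_i}\adj\circ u \;=\; {\iota_i}\adj\circ[e_1,\ldots,e_n]\adj\circ g\circ w \;=\; 0,
\]
contradicting $\{e_1,\ldots,e_n\}\c=\{0\}$. Hence $B=0$, and $X=n\,I$ via $e_1,\ldots,e_n$.

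The main conceptual step is the use of (H3) together with (H4)(a) in the converse direction: (H3) converts the purely algebraic ONB condition into the existence of an orthogonal complement as an object of $\C$, while (H4)(a) is what forces that complement to collapse when no vectors remain to witness it. No heavy calculation is required beyond the iterated version of Lemma~\ref{lem:plus-oplus-2}, which I would either quote as the standard completeness relation for biproduct injections or establish by an easy induction on $n$.
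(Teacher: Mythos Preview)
Your proof is correct and follows essentially the same approach as the paper's: both directions hinge on Lemma~\ref{lem:dagger-mono}, the completeness relation from (the $n$-fold version of) Lemma~\ref{lem:plus-oplus-2}, and the combination of (H3) with (H4) to kill the complementary summand. The only cosmetic difference is that in the converse the paper invokes (H4)(a) and (b) together to obtain a unit vector in the complement, whereas you use (H4)(a) alone and rely on $g$ being monic to conclude $g\circ w\neq 0$; both variants work.
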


\begin{proof}
By (H4)(a), ${\mathcal V}(X)$ is the zero space if and only if $X = 0$. Hence the assertions are clear in the case $n = 0$. We shall verify the lemma in the case $n = 2$. The arguments for $n \geq 3$ and for $n = 1$ are similar.

($\Rightarrow$): Let $\{e_1,e_2\}$ be an orthonormal basis of ${\mathcal V}(X)$. Consider the dagger biproduct $\begin{tikzcd}[cramped] I \arrow[r, "\iota_1"] & I \oplus I & I \arrow[l, "\iota_2"'] \end{tikzcd}$. By Lemma~\ref{lem:dagger-mono}, $[e_1,e_2] \colon I \oplus I \to X$ is a dagger monomorphism. We claim that $[e_1, e_2]$ is in fact a dagger isomorphism. According to (H3), there is a biproduct $\begin{tikzcd}[cramped] I \oplus I \arrow[r, "{[e_1,e_2]}"] & X & R \arrow[l, "r"'] \end{tikzcd}$. If $R$ is non-zero, there is by (H4) a dagger monomorphism $u \colon I \to R$. But then $r \circ u$, which is by Lemma~\ref{lem:uniformity}(i) a unit vector, is orthogonal to $e_1$ and $e_2$, a contradiction. Thus, by Lemma~\ref{lem:zero-in-biproduct}, $[e_1,e_2]$ is indeed a dagger isomorphism. As $e_1 = [e_1,e_2] \circ \iota_1$ and $e_2 = [e_1,e_2] \circ \iota_2$ we conclude that $\begin{tikzcd}[cramped] I \arrow[r, "e_1"] & X & I \arrow[l, "e_2"'] \end{tikzcd}$ is a dagger biproduct.

($\Leftarrow$): Assume that $\begin{tikzcd}[cramped] I \arrow[r, "e_1"] & X & I \arrow[l, "e_2"'] \end{tikzcd}$ is a dagger biproduct. Then $e_1, e_2$ are orthogonal vectors of ${\mathcal V}(X)$ and by Lemma~\ref{lem:uniformity}(i), $e_1, e_2$ are unit vectors. For any $u \colon I \to X$, we moreover have $u = e_1 \circ {e_1}\adj \circ u + e_2 \circ {e_2}\adj \circ u = ({e_1}\adj \circ u) \cdot e_1 + ({e_2}\adj \circ u) \cdot e_2$ by Lemma~\ref{lem:plus-oplus-2}. This shows (\ref{fml:ONB}) and we conclude that $\{e_1,e_2\}$ is an orthonormal basis.
\end{proof}

We will say that an object $X \in \C$ {\it has rank $n$}, where $n \in \Naturals$, if ${\mathcal V}(X)$ is $n$-dimensional, and we will say that $X$ has {\it infinite rank} if ${\mathcal V}(X)$ is infinite-dimensional.

By Lem\-ma~\ref{lem:ONB}, $X$ has rank $n$ if and only if $X$ is dagger isomorphic to $n \, I$. In particular, there is an object in $\C$ of any given finite rank. The next lemma shows that, in a sense, $\C$ contains objects of arbitrary size.

\begin{lemma} \label{lem:gen-colimit-of-finite-dimensional-subspaces}
Let $E$ be a non-empty set and let  $\mathcal F$ be the set of all non-empty finite subsets of $E$, partially ordered by inclusion.  For every $F = \{ \kappa_1, \ldots, \kappa_k \} \in {\mathcal F}$, let $A_F = I_{\kappa_1} \oplus \ldots \oplus I_{\kappa_k}$ via inclusion maps $\iota_{F,\kappa_1}, \ldots, \iota_{F,\kappa_k}$.  For every $F, G \in {\mathcal F}$ such that $F \subseteq G$ and $F = \{ \kappa_1, \ldots, \kappa_k \}$, let $k_{F \subseteq G} = [\iota_{G,{\kappa_1}}, \ldots \iota_{G,{\kappa_k}}] \colon A_F \to A_G$. In $\Cdm$, $\big(A_F \xlongrightarrow[]{k_{F \subseteq G}} A_G\big)_{F,G \in {\mathcal F}}$ is then a diagram of type $\mathcal F$ and has a colimit $\big(A_F \xlongrightarrow[]{h_F} X\big)_{F \in {\mathcal F}}$. Moreover, $\{ h_{\{ \kappa \}} \colon \kappa \in E\}$ is an orthonormal basis of ${\mathcal V}(X)$.
\end{lemma}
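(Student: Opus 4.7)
My plan is to first check that the prescribed data forms a directed diagram in $\Cdm$, apply (H2) to obtain a colimit cocone $(h_F)_{F \in \mathcal F}$, and then exploit joint epimorphicity together with the internal biproduct structure of each $A_F$ to establish the orthonormal basis property.

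That each $k_{F \subseteq G}$ is a dagger monomorphism is immediate from Lemma~\ref{lem:dagger-mono}: writing $F = \{\kappa_1, \ldots, \kappa_k\} \subseteq G$, the morphisms $\iota_{G,\kappa_1}, \ldots, \iota_{G,\kappa_k}$ are a subfamily of the pairwise orthogonal injections of the biproduct $A_G$, hence pairwise orthogonal unit vectors of ${\mathcal V}(A_G)$. The functoriality identities $k_{F \subseteq F} = \id_{A_F}$ and $k_{G \subseteq H} \circ k_{F \subseteq G} = k_{F \subseteq H}$ reduce to checking equality after post-composition with each projection of the target biproduct, which is routine. Since $\mathcal F$ is directed under union, (H2) supplies a colimit cocone $(h_F \colon A_F \to X)_{F \in \mathcal F}$ in $\Cdm$ whose legs are jointly epic in $\C$. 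In particular every $h_{\{\kappa\}}$ is a dagger monomorphism from $I_\kappa$ and hence, by Lemma~\ref{lem:uniformity}(i), a unit vector of ${\mathcal V}(X)$. Orthogonality of $h_{\{\kappa\}}$ and $h_{\{\lambda\}}$ for $\kappa \neq \lambda$ is obtained by factoring both through $h_{\{\kappa,\lambda\}}$: using $h_{\{\kappa\}} = h_{\{\kappa,\lambda\}} \circ \iota_{\{\kappa,\lambda\},\kappa}$ (and the analogous identity for $\lambda$), the fact that $h_{\{\kappa,\lambda\}}$ is a dagger monomorphism, and the orthogonality of the injections of $A_{\{\kappa,\lambda\}}$, a one-line computation gives $h_{\{\lambda\}}\adj \circ h_{\{\kappa\}} = 0$.

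The essential step is to prove $\{ h_{\{\kappa\}} : \kappa \in E \}\c = \{0\}$. Given $v \in {\mathcal V}(X)$ with $h_{\{\kappa\}}\adj \circ v = 0$ for every $\kappa \in E$, I aim to show $v\adj \circ h_F = 0$ for each $F \in \mathcal F$; joint epimorphicity of $(h_F)$ will then allow me to compare with $0_{X,I} \circ h_F = 0$ and conclude $v\adj = 0$, i.e.\ $v = 0$. For the inner step I invoke the $n$-ary analogue of Lemma~\ref{lem:plus-oplus-2} inside the biproduct $A_F$, namely the resolution of identity $\id_{A_F} = \sum_i \iota_{F,\kappa_i} \circ \iota_{F,\kappa_i}\adj$, combined with the cocone identity $h_F \circ \iota_{F,\kappa_i} = h_{\{\kappa_i\}}$. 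Expanding $v\adj \circ h_F = v\adj \circ h_F \circ \id_{A_F}$ through this resolution rewrites each summand as $(h_{\{\kappa_i\}}\adj \circ v)\adj \circ \iota_{F,\kappa_i}\adj$, which vanishes by hypothesis.

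The main obstacle I foresee is not conceptual but notational: the $n$-ary resolution of identity needed in the final step is a straightforward induction from the binary case of Lemma~\ref{lem:plus-oplus-2} via associativity of the biproduct, and once available, the entire argument rests cleanly on the joint epimorphicity guaranteed by (H2).
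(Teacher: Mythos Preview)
Your proof is correct and follows essentially the same route as the paper's: verify that the $k_{F\subseteq G}$ are dagger monomorphisms via Lemma~\ref{lem:dagger-mono}, invoke (H2) for the colimit, check orthonormality by factoring through $h_{\{\kappa,\lambda\}}$, and use joint epimorphicity to conclude $\{h_{\{\kappa\}}\}\c = \{0\}$. The only cosmetic difference is that for the step $v\adj \circ h_F = 0$ the paper tacitly uses the coproduct universal property of $A_F$ (precompose with each $\iota_{F,\kappa_i}$), whereas you spell it out via the $n$-ary resolution of the identity---either argument works.
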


\begin{proof}
%
For $F, G \in {\mathcal F}$ such that $F \subseteq G$, $k_{F \subseteq G}$ is by Lemma~\ref{lem:dagger-mono} a dagger monomorphism. By pre-composing with the inclusion maps, we moreover readily check that $\mathbf D = \big(A_F \xlongrightarrow[]{k_{F \subseteq G}} A_G\big)_{F,G \in {\mathcal F}}$ is a diagram of type $\mathcal F$.

By (H2), $\mathbf D$ has in $\Cdm$ a colimit $\big(A_F \xlongrightarrow[]{h_F} X\big)_{F \in {\mathcal F}}$. It remains to show that $\{ A_{\{\kappa\}} \xlongrightarrow[]{h_{\{\kappa\}}} X \colon \kappa \in E \}$ is an orthonormal basis of ${\mathcal V}(X)$. For each $\kappa \in E$, $h_{\{\kappa\}}$ is a dagger monomorphism and hence a unit vector of ${\mathcal V}(X)$. Moreover, for distinct $\kappa,\kappa' \in E$, $h_{\{\kappa\}} = h_{\{\kappa,\kappa'\}} \circ k_{\{\kappa\},\{\kappa,\kappa'\}}$ is orthogonal to $h_{\{\kappa'\}} = h_{\{\kappa,\kappa'\}} \circ k_{\{\kappa'\},\{\kappa,\kappa'\}}$. Assume that $u \in {\mathcal V}(X)$ is orthogonal to every $h_{\{\kappa\}}$, $\kappa \in E$. This means $u\adj \circ h_{\{\kappa\}} = 0_{A_{\{\kappa\},I}}$ for any $\kappa \in E$. It follows $u\adj \circ h_F = 0_{A_F,I}$ for any $F \in {\mathcal F}$. By the second part of (H2), $(h_F)_{F \in {\mathcal F}}$ are jointly epic. Hence $u\adj = 0_{X,I}$, that is, $u = 0$.
\end{proof}

\begin{lemma} \label{lem:infinite-rank-object}
Let $\lambda$ be a cardinal. Then there is an $X \in \C$ such that ${\mathcal V}(X)$ has an orthonormal basis of cardinality $\lambda$.
\end{lemma}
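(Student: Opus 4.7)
The plan is to reduce this immediately to Lemma~\ref{lem:gen-colimit-of-finite-dimensional-subspaces}, which is precisely designed to produce such an object. The proof will split into three cases according to the cardinal $\lambda$.

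If $\lambda = 0$, I would take $X = 0$. By (H4)(a), any non-zero object admits a non-zero morphism from $I$, so contrapositively ${\mathcal V}(0) = \homset(I,0)$ consists only of the zero morphism; this zero Hermitian space has the empty set as its (unique) orthonormal basis.

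If $\lambda \geq 1$, I would choose any set $E$ of cardinality $\lambda$ and apply Lemma~\ref{lem:gen-colimit-of-finite-dimensional-subspaces} directly. This produces an object $X \in \C$ together with a family $\{h_{\{\kappa\}} : \kappa \in E\}$ which is an orthonormal basis of ${\mathcal V}(X)$. The only thing remaining to verify is that this basis has cardinality exactly $\lambda$, i.e.\ that the assignment $\kappa \mapsto h_{\{\kappa\}}$ is injective. But for distinct $\kappa, \kappa' \in E$ the vectors $h_{\{\kappa\}}$ and $h_{\{\kappa'\}}$ are orthogonal unit vectors, hence in particular non-equal. So $|\{h_{\{\kappa\}} : \kappa \in E\}| = |E| = \lambda$, as required.

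There is no real obstacle here; the lemma is a packaging corollary of the hard colimit construction carried out in Lemma~\ref{lem:gen-colimit-of-finite-dimensional-subspaces}. The only subtlety is the book-keeping for $\lambda = 0$, which Lemma~\ref{lem:gen-colimit-of-finite-dimensional-subspaces} excludes by its non-emptiness assumption on $E$ and which is therefore handled separately by the zero object.
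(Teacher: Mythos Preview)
Your proposal is correct and takes essentially the same approach as the paper, which simply declares the result ``immediate from Lemma~\ref{lem:gen-colimit-of-finite-dimensional-subspaces}''; you merely spell out the $\lambda=0$ case and the injectivity of $\kappa\mapsto h_{\{\kappa\}}$ that the paper leaves implicit. (Minor quibble: you announce ``three cases'' but only treat two, and the appeal to (H4)(a) for $\lambda=0$ is unnecessary since $\homset(I,0)=\{0\}$ already follows from $0$ being terminal.)
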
 

\begin{proof}
This is immediate from Lemma~\ref{lem:gen-colimit-of-finite-dimensional-subspaces}.
\end{proof}

We may now formulate an analogue of Lemma~\ref{lem:ONB} for arbitrary objects.

\begin{lemma} \label{lem:colimit-of-finite-dimensional-subspaces}
Let $X \in \C$ be non-zero and let $E$ be a non-empty set of pairwise orthogonal unit vectors of ${\mathcal V}(X)$. Let $\mathcal F$ be the set of all non-empty finite subsets of $E$, partially ordered by inclusion. For each $F = \{ I_{e_1} \xlongrightarrow[]{e_1} X, \; \ldots, \; I_{e_n} \xlongrightarrow[]{e_n} X \}\in \mathcal F$, let $A_F = I_{e_1} \oplus \ldots \oplus I_{e_n}$ and $g_F = [e_1, \ldots, e_n] \colon A_F \to X$. Then there is a unique diagram $\mathbf D$ of type $\mathcal F$ such that $\big(A_F \xlongrightarrow[]{g_F} X\big)_{F \in {\mathcal F}}$ is a cocone of $\mathbf D$.

Moreover, $E$ is an orthonormal basis of ${\mathcal V}(X)$ if and only if $\big(A_F \xlongrightarrow[]{g_F} X\big)_{F \in {\mathcal F}}$ is in $\Cdm$ a colimit of $\mathbf D$. In particular, $X$ is in $\Cdm$ the colimit of a directed diagram of objects of finite rank.
\end{lemma}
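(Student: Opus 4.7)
The plan divides naturally into: (a) constructing the diagram $\mathbf D$ and establishing its uniqueness; (b) verifying the equivalence; and (c) deducing the final statement about directed colimits of finite-rank objects.

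For (a), given $F \subseteq G$ with $F = \{d_{i_1},\ldots,d_{i_k}\}$ and $G = \{d_1,\ldots,d_m\}$, I would define the transition morphism $k_{F \subseteq G} \colon A_F \to A_G$ to be $[\iota_{G,d_{i_1}},\ldots,\iota_{G,d_{i_k}}]$, where $\iota_{G,d_j}$ are the injections of the biproduct $A_G$. Since the $\iota_{G,d_j}$ are pairwise orthogonal unit vectors of ${\mathcal V}(A_G)$, Lemma~\ref{lem:dagger-mono} shows $k_{F \subseteq G}$ is a dagger monomorphism, so it lies in $\Cdm$. A direct coproduct computation, using $g_G \circ \iota_{G,d_{i_j}} = d_{i_j}$, gives $g_G \circ k_{F \subseteq G} = g_F$, so $(g_F)_F$ is indeed a cocone. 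Uniqueness of $\mathbf D$ and its functoriality (i.e.\ $k_{G \subseteq H} \circ k_{F \subseteq G} = k_{F \subseteq H}$) then both follow from the fact that each $g_G$ is a dagger monomorphism, hence a monomorphism: any two morphisms $A_F \to A_G$ that agree after post-composition with $g_G$ must coincide.

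For the ($\Rightarrow$) part of (b), assume $E$ is an orthonormal basis. By (H2), $\mathbf D$ admits a colimit $\big(A_F \xlongrightarrow[]{h_F} Y\big)_F$ in $\Cdm$, and the universal property yields a unique dagger monomorphism $\phi \colon Y \to X$ in $\Cdm$ satisfying $\phi \circ h_F = g_F$ for all $F$. I want to argue $\phi$ is a dagger isomorphism. Apply (H3) to complete $\phi$ to a dagger biproduct $\begin{tikzcd}[cramped] Y \arrow[r,"\phi"] & X & R \arrow[l,"r"'] \end{tikzcd}$. If $R$ were non-zero, then by (H4)(a) and (H4)(b) we would obtain a dagger monomorphism $u \colon I \to R$, and by Lemma~\ref{lem:uniformity}(i) the composite $r \circ u$ is a unit vector of ${\mathcal V}(X)$. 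Since $r\adj \circ \phi = 0$ by the biproduct axioms and each $e \in E$ factors as $\phi \circ h_{\{e\}}$, we get $(r \circ u)\adj \circ e = u\adj \circ r\adj \circ \phi \circ h_{\{e\}} = 0$ for all $e \in E$, contradicting the assumption that $E$ is an orthonormal basis. Hence $R = 0$, and by Lemma~\ref{lem:zero-in-biproduct}, $\phi$ is a dagger isomorphism. Then $(g_F)_F$, being the image of the colimit cocone under the dagger isomorphism $\phi$, is itself a colimit cocone in $\Cdm$.

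For the ($\Leftarrow$) part, assume $(g_F)_F$ is a colimit cocone in $\Cdm$. To show $E$ is an orthonormal basis, take any $u \in {\mathcal V}(X)$ orthogonal to every element of $E$, i.e.\ $e\adj \circ u = 0$ for all $e \in E$. For each $F = \{e_1,\ldots,e_n\}$, post-composing $u\adj \circ g_F \colon A_F \to I$ with the injection $\iota_{F,j}$ gives $u\adj \circ e_j = 0$, so by the coproduct property $u\adj \circ g_F = 0$. By the joint epic clause of (H2), $u\adj = 0$, whence $u = 0$. So $E\c = \{0\}$, confirming $E$ is an orthonormal basis. The final "in particular" then follows at once: by Lemma~\ref{lem:uniformity}(iii) every non-zero $X$ admits an orthonormal basis $E$, and applying the equivalence just proven exhibits $X$ as the $\Cdm$-colimit of the directed diagram $\mathbf D$, whose objects $A_F = |F| \cdot I$ have finite rank (the zero object case being trivial). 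The only delicate point in the whole argument is the ($\Rightarrow$) direction, where care is needed to ensure the mediating morphism $\phi$ lies in $\Cdm$ so that (H3) can be applied to rule out a non-zero orthocomplement.
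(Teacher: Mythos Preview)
Your proposal is correct and follows essentially the same route as the paper's proof: you define the transition maps $k_{F\subseteq G}$ via the biproduct injections, use monicity of the $g_G$ for uniqueness, and for the equivalence you argue via (H2), (H3), and the joint-epic clause exactly as the paper does. The only cosmetic differences are that you spell out the functoriality argument directly (the paper defers this to Lemma~\ref{lem:gen-colimit-of-finite-dimensional-subspaces}) and that you make the appeal to (H4)(a)--(b) explicit when producing the dagger monomorphism $I\to R$.
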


\begin{proof}
For $F, G \in {\mathcal F}$ such that $F \subseteq G$, we define the dagger monomorphisms $k_{F \subseteq G}$ as in Lemma~\ref{lem:gen-colimit-of-finite-dimensional-subspaces}. According to this lemma, $\mathbf D = \big(A_F \xlongrightarrow[]{k_{F \subseteq G}} A_G\big)_{F,G \in {\mathcal F}}$ is a diagram of type $\mathcal F$ in $\Cdm$. Moreover, for every $F, G \in {\mathcal F}$ such that $F \subseteq G$, the pre-composition with the inclusion maps shows $g_G \circ k_{F \subseteq G} = g_F$. We conclude that $\big(A_F \xlongrightarrow[]{g_F} X\big)_{F \in {\mathcal F}}$ is a cocone of $\mathbf D$. Moreover, as $g_G$ is a dagger monomorphism, $k_{F \subseteq G}$ is the unique morphism such that $g_G \circ k_{F \subseteq G} = g_F$. That is, $\mathbf D$ is the unique diagram of type $\mathcal F$ whose cocone is $\big(A_F \xlongrightarrow[]{g_F} X\big)_{F \in {\mathcal F}}$.

Assume now that $E$ is an orthonormal basis of ${\mathcal V}(X)$. Let $\big(A_F \xlongrightarrow[]{h_F} A\big)_{F \in {\mathcal F}}$ be the colimit of $\mathbf D$ in $\Cdm$ and let $g \colon A \to X$ be the unique dagger monomorphism associated with $\big(A_F \xlongrightarrow[]{g_F} X\big)_{F \in {\mathcal F}}$. By (H3), there is a biproduct $\begin{tikzcd}[cramped] A \arrow[r, "g"] & X & R \arrow[l, "r"'] \end{tikzcd}$. If $R$ is non-zero, there is a  dagger monomorphism $u \colon I \to R$. As any $e \in E$ factorises through $A$, this means that $r \circ u$ is a unit vector orthogonal to $E$, a contradiction. Hence $g$ is a dagger isomorphism and we have shown that, in $\Cdm$, $\big(A_F \xlongrightarrow[]{g_F} X\big)_{F \in {\mathcal F}}$ is a colimit of $\mathbf D$.

Conversely, assume that $\big(A_F \xlongrightarrow[]{g_F} X\big)_{F \in {\mathcal F}}$ is in $\Cdm$ a colimit of $\mathbf D$. Let $u \in {\mathcal V}(X)$ be orthogonal to every $e \in E$. This means $u\adj \circ e = 0_{I,I}$ for any $e \in E$ and it follows that $u\adj \circ g_F = 0_{A_F,I}$ for any $F \in {\mathcal F}$. By (H2), $u\adj = 0_{X,I}$ and thus $u = 0$. This shows that $E$ is an orthonormal basis.
\end{proof}

\begin{proposition} \label{prop:V-is-faithful}
The dagger functor $\mathcal V$ is faithful.
\end{proposition}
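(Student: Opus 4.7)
The plan is to unpack what faithfulness means concretely: given $f,g\colon X\to Y$ with $\mathcal V(f)=\mathcal V(g)$, i.e.\ $f\circ u=g\circ u$ for every $u\in\mathcal V(X)=\homset(I,X)$, I want to deduce $f=g$. The case $X=0$ is trivial since $\homset(0,Y)$ is a singleton, so I may assume $X\ne 0$.

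The strategy is to reduce the problem from $X$ to its ``finite-dimensional approximations'' and then from those approximations further down to $I$ itself, where the hypothesis directly applies. By Lemma~\ref{lem:uniformity}(iii), $\mathcal V(X)$ admits an orthonormal basis $E$. Applying Lemma~\ref{lem:colimit-of-finite-dimensional-subspaces} to $E$, I obtain a directed diagram $\mathbf D$ of type $\mathcal F$ (the non-empty finite subsets of $E$) in $\Cdm$ whose colimit cocone is $\bigl(A_F\xlongrightarrow{g_F}X\bigr)_{F\in\mathcal F}$, where $A_F=I_{e_1}\oplus\cdots\oplus I_{e_n}$ and $g_F=[e_1,\ldots,e_n]$ for $F=\{e_1,\ldots,e_n\}$. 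The second clause of (H2) then tells me that the family $(g_F)_{F\in\mathcal F}$ is jointly epic in~$\C$. So it suffices to show $f\circ g_F=g\circ g_F$ for each $F$.

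Fix such an $F$, with biproduct injections $\iota_j\colon I\to A_F$. From $g_F=[e_1,\ldots,e_n]$ I get $g_F\circ\iota_j=e_j$ for every $j$, hence
\[ f\circ g_F\circ\iota_j \;=\; f\circ e_j \;=\; g\circ e_j \;=\; g\circ g_F\circ\iota_j \]
by the assumption on $f,g$. By the universal property of $A_F$ as a coproduct, any morphism out of $A_F$ is determined by its pre-compositions with the injections $\iota_1,\ldots,\iota_n$, so $f\circ g_F=g\circ g_F$, as desired.

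I don't foresee a genuine obstacle here: the statement is an almost immediate consequence of the infrastructure already assembled. The only conceptual point worth flagging is that the argument splits the ``test'' for equality into two layers — the biproduct universal property supplies enough test morphisms from each finite $A_F$, and the joint epimorphy provided by (H2) promotes this to a test on all of $X$ — so the only places where the hypotheses really bite are Lemma~\ref{lem:uniformity}(iii) (existence of an orthonormal basis, itself relying on (H4)(b) via uniformity) and the ``jointly epic'' half of (H2).
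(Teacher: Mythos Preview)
Your proof is correct and follows essentially the same approach as the paper: reduce to the legs $g_F$ of the colimit cocone from Lemma~\ref{lem:colimit-of-finite-dimensional-subspaces} and use the coproduct universal property of $A_F$ to bring the test down to morphisms out of $I$, then invoke the jointly-epic clause of (H2). The paper's version is terser (it phrases the intermediate step as ``$f\circ h=g\circ h$ for any dagger monomorphism $h$ from an object of finite rank''), while you spell out the coproduct argument and handle the case $X=0$ explicitly.
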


\begin{proof}
Let $f, g \colon X \to Y$ be such that ${\mathcal V}(f) = {\mathcal V}(g)$. This means $f \circ u = g \circ u$ for any $u \colon I \to X$. Hence $f \circ h = g \circ h$ for any dagger monomorphism $h$ from an object of finite rank to $X$. By Lemma~\ref{lem:colimit-of-finite-dimensional-subspaces}, $X$ is in $\Cdm$ the colimit of a directed diagram of objects of finite rank. By the second part of (H2), we conclude $f = g$.
\end{proof}

Our next aim is to establish, for any $X \in \C$, a correspondence between the orthoclosed subspaces of ${\mathcal V}(X)$ and the dagger monomorphisms into $X$.

\begin{lemma} \label{lem:dagger-mono-induces-isometry}
Let $h \colon A \to X$ be a dagger monomorphism.
\begin{itemize}

\item[\rm (i)] ${\mathcal V}(h)$ is an isometry. In particular, $\mathfrak M = \image {\mathcal V}(h)$ is an orthoclosed subspace of ${\mathcal V}(X)$, its orthocomplement is ${\mathfrak M}\c = \kernel {\mathcal V}(h\adj) = \{ u \in {\mathcal V}(X) \colon h\adj \circ u = 0 \}$, and we have ${\mathcal V}(X) = \mathfrak M \oplus {\mathfrak M}\c$.

\item[\rm (ii)] Let $\begin{tikzcd}[cramped] A \arrow[r, "h"] & X & B \arrow[l, "k"'] \end{tikzcd}$ be a dagger biproduct. Then ${\mathcal V}(h)$ and ${\mathcal V}(k)$ are isometries, and ${\mathcal V}(X) = \image {\mathcal V}(h) \oplus \image {\mathcal V}(k)$.

\item[\rm (iii)] If $h$ is an isomorphism, then ${\mathcal V}(h)$ is a unitary map.

\end{itemize}
\end{lemma}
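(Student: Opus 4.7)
Plan. For part (i), I would exploit that $\mathcal V$ is a dagger functor (Theorem \ref{thm:Hermitian-structure-H}): applying it to $h\adj \circ h = \id_A$ gives ${\mathcal V}(h)\adj \circ {\mathcal V}(h) = \id_{{\mathcal V}(A)}$, so ${\mathcal V}(h)$ is an isometry. Setting $\mathfrak M = \image {\mathcal V}(h)$, I would then compute $\mathfrak M\c$ directly from the definition of the inner product in Definition \ref{def:linear-structure-H}: $v \in \mathfrak M\c$ iff $(h \circ u)\adj \circ v = u\adj \circ (h\adj \circ v) = 0$ for every $u \colon I \to A$. Taking $u = h\adj \circ v$ (and invoking anisotropy as in Theorem \ref{thm:Hermitian-structure-H}) forces $h\adj \circ v = 0$, identifying $\mathfrak M\c$ with $\kernel {\mathcal V}(h\adj)$.

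To obtain the orthogonal decomposition, I would invoke (H3) to extend $h$ to a dagger biproduct $\begin{tikzcd}[cramped] A \arrow[r, "h"] & X & B \arrow[l, "k"'] \end{tikzcd}$ and apply Lemma \ref{lem:plus-oplus-2}, which yields $h \circ h\adj + k \circ k\adj = \id_X$. Precomposing with any $v \in {\mathcal V}(X)$ writes $v = h \circ (h\adj \circ v) + k \circ (k\adj \circ v)$. The first summand lies in $\mathfrak M$; the second lies in $\mathfrak M\c$ by the characterisation just obtained, since $h\adj \circ k = 0$ in a dagger biproduct. Orthogonality of the two summands gives directness, and $\mathfrak M = \mathfrak M^{\perp\perp}$ follows, so $\mathfrak M$ is orthoclosed.

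Part (ii) is then essentially immediate: both $h$ and $k$ are dagger monomorphisms, so each induces an isometry by part (i), and the decomposition ${\mathcal V}(X) = \image {\mathcal V}(h) \oplus \image {\mathcal V}(k)$ reads off from the same identity $h \circ h\adj + k \circ k\adj = \id_X$ together with the orthogonality $\herm{h \circ u}{k \circ w} = w\adj \circ (k\adj \circ h) \circ u = 0$ coming from the biproduct equation $k\adj \circ h = 0_{A,B}$.

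For part (iii), I would observe that a dagger monomorphism $h$ which is also an isomorphism satisfies $h\adj = h\adj \circ h \circ h^{-1} = h^{-1}$, so $h$ is actually a dagger isomorphism. Then ${\mathcal V}(h)\adj = {\mathcal V}(h\adj) = {\mathcal V}(h^{-1}) = {\mathcal V}(h)^{-1}$ by functoriality, making ${\mathcal V}(h)$ a bijective isometry, i.e.\ unitary. The main subtlety throughout is the identification $\mathfrak M\c = \kernel {\mathcal V}(h\adj)$: one direction is formal, while the other relies on anisotropy of the form on ${\mathcal V}(X)$, which in turn rests on (H4)(b) via Theorem \ref{thm:Hermitian-structure-H}. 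Everything else reduces to manipulating the biproduct identities already established.
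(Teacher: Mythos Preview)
Your proof is correct. The only notable divergence from the paper is in part~(i): after establishing that ${\mathcal V}(h)$ is an isometry, the paper simply invokes the general fact about isometries between Hermitian spaces recorded in Section~\ref{sec:Hermitian-spaces} (namely, that for any isometry $U \colon \mathfrak H_1 \to \mathfrak H_2$ one has $\mathfrak H_2 = \image U \oplus \kernel U\adj$), which already packages the identification $\mathfrak M\c = \kernel {\mathcal V}(h\adj)$ and the splitting. You instead re-derive this from scratch via anisotropy and an explicit appeal to (H3) and Lemma~\ref{lem:plus-oplus-2}, effectively front-loading the computation that the paper defers to part~(ii). Your route is more self-contained and makes the role of (H3) visible already in (i); the paper's route is shorter because the Hermitian-space groundwork has already been laid. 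Your treatment of (iii) is also slightly more explicit: you observe that a dagger monomorphism which is an isomorphism must satisfy $h\adj = h^{-1}$, whereas the paper's ``as in part~(i)'' leaves this step implicit.
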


\begin{proof}
Ad (i): By functoriality, ${\mathcal V}(h)\adj \circ {\mathcal V}(h) = {\mathcal V}(h\adj) \circ {\mathcal V}(h) = {\mathcal V}(h\adj \circ h) = {\mathcal V}(\id_A) = \id_{{\mathcal V}(A)}$. Hence ${\mathcal V}(h)$ is an isometry and it further follows that ${\mathcal V}(X) = \image {\mathcal V}(h) \oplus \kernel {\mathcal V}(h)\adj = \image {\mathcal V}(h) \oplus \kernel {\mathcal V}(h\adj)$.

Ad (ii): By part (i), ${\mathcal V}(h)$ and ${\mathcal V}(k)$ are isometries. Moreover, we clearly have $\image {\mathcal V}(h) \perp \image {\mathcal V}(k)$, and for $u \in {\mathcal V}(X)$,
\[ u \;=\; h \circ h\adj \circ u + k \circ k\adj \circ u
\;=\; {\mathcal V}(h)(h\adj \circ u) + {\mathcal V}(k)(k\adj \circ u) \]
by Lemma~\ref{lem:plus-oplus-2}, hence ${\mathcal V}(X) = \image {\mathcal V}(h) + \image {\mathcal V}(k)$.

Ad (iii): As in part (i), we conclude ${\mathcal V}(h)\adj \circ {\mathcal V}(h) = \id_{{\mathcal V}(A)}$ and ${\mathcal V}(h) \circ {\mathcal V}(h)\adj = \id_{{\mathcal V}(X)}$.
\end{proof}

By Lemmas~\ref{lem:dagger-mono-induces-isometry}(i), any dagger monomorphism $h \colon A \to X$ gives rise to the orthoclosed subspace $\image {\mathcal V}(h) = \{ h \circ u \colon u \in {\mathcal V}(A) \}$ of ${\mathcal V}(X)$. We now show that every orthoclosed subspace of ${\mathcal V}(X)$ arises in this way.

\begin{lemma} \label{lem:orthoclosed-subspaces-dagger-mono}
Let $X \in \C$ and let $\mathfrak M$ be an orthoclosed subspace of \/ ${\mathcal V}(X)$. Then there is a dagger monomorphism $h$ such that $\mathfrak M = \image {\mathcal V}(h)$.
\end{lemma}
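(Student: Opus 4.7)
The plan is to apply Zorn's Lemma to the (small) poset $\mathcal P$ whose elements are the dagger monomorphisms $h \colon A \to X$ satisfying $\image {\mathcal V}(h) \subseteq \mathfrak M$, identified up to the obvious equivalence and ordered by $(A, h) \leq (A', h')$ iff there exists a dagger monomorphism $j \colon A \to A'$ with $h' \circ j = h$. A first observation, using the candidate $j = h'\adj \circ h$ together with Proposition~\ref{prop:V-is-faithful}, is that this ordering coincides with the inclusion $\image {\mathcal V}(h) \subseteq \image {\mathcal V}(h')$; so $\mathcal P$ is in fact a set, because the images form a family of subsets of ${\mathcal V}(X)$.

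The crux of the argument is to produce upper bounds for chains. Given a chain $(A_i, h_i)_{i \in K}$, I will form the directed colimit $\bigl(A_i \xlongrightarrow{\lambda_i} A\bigr)_{i \in K}$ in $\Cdm$ guaranteed by (H2), and invoke its universal property to obtain a dagger monomorphism $h \colon A \to X$ with $h \circ \lambda_i = h_i$. The delicate point is $\image {\mathcal V}(h) \subseteq \mathfrak M$, which I will establish via the orthocomplement: for any $w \in \mathfrak M\c$, the orthogonality of $\image {\mathcal V}(h_i) \subseteq \mathfrak M$ to $w$ combined with anisotropy of the form on $\mathcal V(A_i)$ forces $h_i\adj \circ w = 0$; hence $\lambda_i\adj \circ (h\adj \circ w) = 0$ for every $i$, and the joint-epicity clause of (H2) yields $h\adj \circ w = 0$. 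Consequently $\langle h \circ u, w\rangle = 0$ for every $u \in {\mathcal V}(A)$, and since $\mathfrak M = \mathfrak M\c{}\c$ this gives $h \circ u \in \mathfrak M$.

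Once Zorn supplies a maximal $h$ with $\mathfrak N := \image {\mathcal V}(h)$, I will argue $\mathfrak N = \mathfrak M$. Assume towards contradiction that $\mathfrak N \subsetneq \mathfrak M$. Lemma~\ref{lem:dagger-mono-induces-isometry}(i) provides the orthogonal decomposition ${\mathcal V}(X) = \mathfrak N \oplus \mathfrak N\c$; decomposing any $v \in \mathfrak M \setminus \mathfrak N$ yields a non-zero element of $\mathfrak M \cap \mathfrak N\c$, which by Lemma~\ref{lem:uniformity}(ii) I can normalise to a unit vector $e \in \mathfrak M$ with $e \perp \image {\mathcal V}(h)$. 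The same anisotropy trick as above gives $h\adj \circ e = 0$, and a short calculation using Lemma~\ref{lem:plus-oplus-2} then verifies $[h, e] \colon A \oplus I \to X$ to be a dagger monomorphism. Its image lies in $\mathfrak M$ and strictly contains $\mathfrak N$ (as it contains $e$), contradicting maximality.

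The main obstacle is the chain step: for a general $u \colon I \to A$ there is no reason to expect a factorisation through any particular $A_i$, so one cannot conclude $h \circ u \in \mathfrak M$ from $\image {\mathcal V}(h_i) \subseteq \mathfrak M$ by direct means. The indirect orthocomplement manoeuvre, which crucially leans on the joint-epicity clause of (H2), is what gets around this.
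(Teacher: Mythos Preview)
Your proof is correct. It differs from the paper's argument in its overall architecture, though the key analytic ingredient---pushing the containment $\image {\mathcal V}(h) \subseteq \mathfrak M$ through a colimit via the orthocomplement and the joint-epicity clause of (H2)---is the same in both.

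The paper proceeds more constructively: it first fixes an orthonormal basis $E$ of $\mathfrak M$, then invokes the ready-made diagram of Lemma~\ref{lem:colimit-of-finite-dimensional-subspaces} over the finite subsets of $E$, takes its colimit $h \colon A \to X$ in $\Cdm$, and verifies the two inclusions $\image {\mathcal V}(h) \subseteq \mathfrak M$ and $\mathfrak M \subseteq \image {\mathcal V}(h)$ directly (the first via $\mathfrak M = C\c$ and joint epicity, the second via the splitting of $\image {\mathcal V}(h)$ and $E \subseteq \image {\mathcal V}(h)$). Your approach instead runs Zorn's Lemma on the poset of dagger subobjects of $X$ landing in $\mathfrak M$, using (H2) only for the chain step, and then enlarges a putative non-maximal element by adjoining a single unit vector via $[h,e]$. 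What your route buys is that no basis of $\mathfrak M$ need be chosen up front and the argument is phrased intrinsically in terms of subobjects; what the paper's route buys is that it recycles the colimit machinery already set up in Lemma~\ref{lem:colimit-of-finite-dimensional-subspaces} and makes the resulting $A$ tangibly a ``sum of copies of $I$''. Two minor remarks on your write-up: in the chain step, the conclusion $h\adj \circ w = 0$ from $\lambda_i\adj \circ (h\adj \circ w) = 0$ is really joint \emph{monicity} of the $\lambda_i\adj$, which follows from joint epicity of the $\lambda_i$ after one application of the dagger; and you should note that the empty chain has $(0, 0_{0,X})$ as an upper bound, so $\mathcal P$ is nonempty.
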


\begin{proof}
If $\mathfrak M$ is $0$-dimensional the zero map $h \colon 0 \to X$ will do. We assume that $\mathfrak M$ is at least $1$-dimensional.

Let $\mathfrak M = C\c$ for some $C \subseteq {\mathcal V}(X)$ and
let $E$ be an orthonormal basis of $\mathfrak M$. Let $\mathcal F$ be the set of non-empty finite subsets of $E$ and define, according to Lemma~\ref{lem:colimit-of-finite-dimensional-subspaces}, the diagram $\mathbf D = \big(A_F \xlongrightarrow[]{k_{F \subseteq G}} A_G\big)_{F,G \in {\mathcal F}}$ with the cocone $\big(A_F \xlongrightarrow[]{g_F} X\big)_{F \in {\mathcal F}}$. By (H2), $\mathbf D$ has a colimit $\big(A_F \xlongrightarrow[]{h_F} A\big)_{F \in {\mathcal F}}$ in $\Cdm$. Let $h \colon A \to X$ be the associated unique dagger monomorphism.

We claim that $\mathfrak M = \image {\mathcal V}(h)$. If $v \in C$, then $v\adj \circ e = 0_{I,I}$ for any $e \in E$, hence $v\adj \circ g_F = v\adj \circ h \circ h_F = 0_{A_F,I}$ for any $F \in {\mathcal F}$. By (H2), $v\adj \circ h = 0$. In particular, $h \circ u \perp C$ for any $u \in {\mathcal V}(A)$, that is, $\image {\mathcal V}(h) \subseteq C\c = \mathfrak M$. To show the reverse inclusion, let $u \in \mathfrak M$. By Lemma~\ref{lem:dagger-mono-induces-isometry}(i), $\image {\mathcal V}(h)$ is a splitting subspace of ${\mathcal V}(X)$. Consequently, $u = v + w$, where $v \in \image {\mathcal V}(h)$ and $w \perp \image {\mathcal V}(h)$. As $v \in \mathfrak M$, we have $w \in \mathfrak M$; and as $E \subseteq \image {\mathcal V}(h)$, we have $w \in E\c$. It follows $w = 0$ and $u \in \image {\mathcal V}(h)$.
\end{proof}

As a consequence, we see that ${\mathcal V}(X)$ is for any $X \in \C$ an orthomodular space.

\begin{lemma} \label{lem:orthomodularity-of-objects}
Let $X \in \C$ and let $\mathfrak M$ be an orthoclosed subspace of
\/ ${\mathcal V}(X)$. Then there is a biproduct $\begin{tikzcd}[cramped] A \arrow[r, "h_A"] & X & B \arrow[l, "h_B"'] \end{tikzcd}$ such that $\mathfrak M = \image {\mathcal V}(h_A)$ and ${\mathfrak M}\c = \image {\mathcal V}(h_B)$. We have ${\mathcal V}(X) = \mathfrak M \oplus {\mathfrak M}\c$, and ${\mathcal V}(h_A \circ {h_A}\adj)$ is the orthogonal projection onto $\mathfrak M$.

In particular, ${\mathcal V}(X)$ is an orthomodular space.
\end{lemma}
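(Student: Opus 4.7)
The plan is to combine the previous two lemmas with axiom (H3) to produce the required biproduct, and then verify the projection formula directly.

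First, I would apply Lemma~\ref{lem:orthoclosed-subspaces-dagger-mono} to the orthoclosed subspace $\mathfrak M \subseteq {\mathcal V}(X)$ to obtain a dagger monomorphism $h_A \colon A \to X$ with $\mathfrak M = \image {\mathcal V}(h_A)$. Then (H3) provides a dagger monomorphism $h_B \colon B \to X$ such that $\begin{tikzcd}[cramped] A \arrow[r, "h_A"] & X & B \arrow[l, "h_B"'] \end{tikzcd}$ is a dagger biproduct. This settles the existence of the biproduct and, by Lemma~\ref{lem:dagger-mono-induces-isometry}(ii), also gives that ${\mathcal V}(h_A)$ and ${\mathcal V}(h_B)$ are isometries with ${\mathcal V}(X) = \image {\mathcal V}(h_A) \oplus \image {\mathcal V}(h_B)$.

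Next I would identify $\image {\mathcal V}(h_B)$ with $\mathfrak M\c$. By Lemma~\ref{lem:dagger-mono-induces-isometry}(i) applied to $h_A$, the orthocomplement of $\mathfrak M = \image {\mathcal V}(h_A)$ in ${\mathcal V}(X)$ equals $\kernel {\mathcal V}({h_A}\adj) = \{u \in {\mathcal V}(X) \colon {h_A}\adj \circ u = 0\}$. Since $\image {\mathcal V}(h_B) \perp \image {\mathcal V}(h_A)$ by the biproduct relation ${h_A}\adj \circ h_B = 0$, we have $\image {\mathcal V}(h_B) \subseteq \mathfrak M\c$; and the direct sum decomposition ${\mathcal V}(X) = \image {\mathcal V}(h_A) \oplus \image {\mathcal V}(h_B) = \mathfrak M \oplus \image {\mathcal V}(h_B)$ forces equality $\image {\mathcal V}(h_B) = \mathfrak M\c$. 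Thus ${\mathcal V}(X) = \mathfrak M \oplus \mathfrak M\c$.

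For the projection claim, I would compute ${\mathcal V}(h_A \circ {h_A}\adj)(u) = h_A \circ {h_A}\adj \circ u$ for $u \in {\mathcal V}(X)$. Writing $u = h_A \circ v + h_B \circ w$ with $v \in {\mathcal V}(A)$ and $w \in {\mathcal V}(B)$, the identities ${h_A}\adj \circ h_A = \id_A$ and ${h_A}\adj \circ h_B = 0$ yield $h_A \circ {h_A}\adj \circ u = h_A \circ v$, which is precisely the component of $u$ in $\mathfrak M$. Hence ${\mathcal V}(h_A \circ {h_A}\adj)$ is the orthogonal projection onto $\mathfrak M$. Finally, since $\mathfrak M$ was an arbitrary orthoclosed subspace and we have shown ${\mathcal V}(X) = \mathfrak M \oplus \mathfrak M\c$, the space ${\mathcal V}(X)$ is orthomodular.

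There is no real obstacle here: the work was done in Lemma~\ref{lem:orthoclosed-subspaces-dagger-mono} (producing $h_A$ from the orthoclosed subspace) and in Lemma~\ref{lem:dagger-mono-induces-isometry}(ii) (giving the orthogonal decomposition from a dagger biproduct). The only step requiring any care is matching $\image {\mathcal V}(h_B)$ with $\mathfrak M\c$ rather than merely a complementary subspace, which is handled by the explicit description of $\mathfrak M\c$ as $\kernel {\mathcal V}({h_A}\adj)$ together with the biproduct relation.
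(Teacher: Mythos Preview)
Your proof is correct and follows essentially the same route as the paper: obtain $h_A$ from Lemma~\ref{lem:orthoclosed-subspaces-dagger-mono}, complete it to a dagger biproduct via (H3), read off the orthogonal decomposition from Lemma~\ref{lem:dagger-mono-induces-isometry}, and verify the projection formula by decomposing $u$ along the biproduct. The only cosmetic difference is that the paper first invokes Lemma~\ref{lem:dagger-mono-induces-isometry}(i) to get ${\mathcal V}(X)=\mathfrak M\oplus\mathfrak M\c$ and then matches $\image{\mathcal V}(h_B)$ with $\mathfrak M\c$ via part~(ii), whereas you spell out that matching explicitly using $\mathfrak M\c=\kernel{\mathcal V}({h_A}\adj)$.
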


\begin{proof}
By Lemma~\ref{lem:orthoclosed-subspaces-dagger-mono}, there is a dagger monomorphism $h_A \colon A \to X$ such that $\mathfrak M = \image {\mathcal V}(h_A)$. By Lemma~\ref{lem:dagger-mono-induces-isometry}(i), we have ${\mathcal V}(X) = \mathfrak M \oplus {\mathfrak M}\c$. By (H3), there is a biproduct $\begin{tikzcd}[cramped] A \arrow[r, "h_A"] & X & B \arrow[l, "h_B"'] \end{tikzcd}$. By Lemma~\ref{lem:dagger-mono-induces-isometry}(ii), ${\mathfrak M}\c = \image {\mathcal V}(h_B)$.

Let $P = {\mathcal V}(h_A \circ {h_A}\adj)$. For any $u \in \mathfrak M$ there is a $v \in {\mathcal V}(A)$ such that $u = h_A \circ v$ and consequently ${\mathcal V}(P)(u) = h_A \circ {h_A}\adj \circ h_A \circ v = h_A \circ v = u$. Similarly, for any $u \perp \mathfrak M$, there is a $v \in {\mathcal V}(B)$ such that $u = h_B \circ v$ and consequently ${\mathcal V}(P)(u) = h_A \circ {h_A}\adj \circ h_B \circ v = 0$. This shows that $P$ is the projection onto $\mathfrak M$.
\end{proof}

\begin{theorem}
For any $X \in \C$,  ${\mathcal V}(X)$ is a Hilbert space over $\Complexes$.
\end{theorem}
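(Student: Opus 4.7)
The plan is a two-step strategy: first pin down $\mathbb F$ as $\Complexes$, then establish completeness of $\mathcal V(X)$ for arbitrary $X$. The main engine is Sol\`er's theorem, together with (H5) to eliminate the real and quaternionic alternatives.

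For the first step, I would apply Lemma~\ref{lem:infinite-rank-object} to produce an object $Y \in \C$ whose Hermitian space $\mathcal V(Y)$ carries a countably infinite orthonormal basis. By Lemma~\ref{lem:orthomodularity-of-objects} this $\mathcal V(Y)$ is orthomodular, so Sol\`er's theorem applies and yields that $\mathbb F$ is, as a $\star$-sfield, isomorphic to one of $\Reals$, $\Complexes$, $\Quaternions$ equipped with its standard involution, and that $\mathcal V(Y)$ is the associated classical Hilbert space. To exclude $\mathbb F = \Reals$, note that $-1_I \in \C(I,I)$ is a dagger automorphism of $I$, and by (H5) it has a strict square root $g \colon I \to I$; since $I$ admits only the trivial projections $0$ and $\id_I$, strictness is automatic and $g \in \mathbb F$ would have to satisfy $g^\star g = 1$ and $g^2 = -1$, which has no real solution. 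To exclude $\mathbb F = \Quaternions$, consider the dagger automorphism $f = (-1_I) \oplus (-1_I) \colon 2I \to 2I$, which under $\mathcal V$ corresponds to $-\id$ on $\Quaternions^2$. By (H5), $f$ has a strict square root $g$. The critical point is that by Lemma~\ref{lem:orthomodularity-of-objects} every orthogonal projection on $\mathcal V(2I)$ is of the form $\mathcal V(p)$ for a projection $p$ on $2I$, and by Proposition~\ref{prop:V-is-faithful} commutation relations transport in both directions through $\mathcal V$; hence $\mathcal V(g)$ is a strict square root of $-\id$ on $\Quaternions^2$. Since $-\id$ commutes with every projection, $\mathcal V(g)$ must likewise commute with every projection, forcing $\mathcal V(g) = \lambda \, \id$ for a central scalar $\lambda$, i.e.\ $\lambda \in \Reals$; but no $\lambda \in \Reals$ satisfies $\lambda^2 = -1$. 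Hence $\mathbb F = \Complexes$.

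For the second step, fix an arbitrary $X \in \C$. If $X$ has finite rank, then by Lemma~\ref{lem:ONB} the space $\mathcal V(X)$ is a finite-dimensional inner product space over $\Complexes$ and is therefore automatically a Hilbert space. If $X$ has infinite rank, then $\mathcal V(X)$ is an infinite-dimensional Hermitian space over $\Complexes$ which is orthomodular by Lemma~\ref{lem:orthomodularity-of-objects} and possesses an orthonormal basis by Lemma~\ref{lem:uniformity}(iii); a second application of Sol\`er's theorem then identifies $\mathcal V(X)$ with the classical complex Hilbert space of the appropriate dimension.

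The main obstacle is the quaternionic exclusion, where one must carefully justify that the categorical notion of strict square root transports through $\mathcal V$ to the analytic notion on $\Quaternions^2$. This is where the interplay between Lemma~\ref{lem:orthomodularity-of-objects} (surjectivity of $\mathcal V$ onto orthogonal projections) and Proposition~\ref{prop:V-is-faithful} (faithfulness of $\mathcal V$) is essential; without both ingredients the translation of the commutation-with-projections clause would fail.
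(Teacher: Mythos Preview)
Your proof is correct and follows essentially the same route as the paper: apply Sol\`er's theorem to an infinite-rank object to force $\mathbb F$ classical, then use (H5) on $(-1)\oplus(-1)\colon 2I\to 2I$ together with Lemma~\ref{lem:orthomodularity-of-objects} and Proposition~\ref{prop:V-is-faithful} to transport the strict-square-root condition through $\mathcal V$ and obtain a central $\alpha$ with $\alpha^2=-1$. The only difference is that you insert a separate exclusion of $\Reals$ via a strict square root of $-1_I$ on $I$; this is valid but redundant, since the $2I$ argument already rules out $\Reals$ (the centre of $\Reals$ is $\Reals$, and $\alpha^2=-1$ has no real solution), which is exactly how the paper handles all three cases in one stroke.
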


\begin{proof}
Let $X \in \C$ be such that ${\mathcal V}(X)$ is infinite-dimensional. By Lemma~\ref{lem:infinite-rank-object}, there is such an object. Then, by Lemmas~\ref{lem:uniformity} and~\ref{lem:orthomodularity-of-objects}, ${\mathcal V}(X)$ is a uniform orthomodular space over $\mathbb F$. Hence, by Sol\` er's Theorem \cite{Sol}, $\mathbb F$ is a classical $\star$-sfield and ${\mathcal V}(X)$ is a Hilbert space. Let $X \in \C$ now have finite rank. Then ${\mathcal V}(X)$ is a finite-dimensional uniform Hermitian space over $\mathbb F$ and hence likewise a Hilbert space.

Consider now the biproduct $\begin{tikzcd}[cramped] I \arrow[r, "e_1"] & I \oplus I & I \arrow[l, "e_2"'] \end{tikzcd}$ and let $\mathfrak H = {\mathcal V}(I \oplus I)$. By Lemma~\ref{lem:ONB}, $\{e_1,e_2\}$ is an orthonormal basis of $\mathfrak H$. Let $h = (-1) \oplus (-1)$ and $U = {\mathcal V}(h)$. Then $U(e_1) = ((-1) \oplus (-1)) \circ e_1 = e_1 \circ (-1) = (-1) \cdot e_1 = -e_1$ and similarly $U(e_2) = -e_2$. As $-1$ is in the centre of $\mathbb F$, we conclude $U = -\id_{\mathfrak H}$.

$h$ is a dagger automorphism and hence, by (H5), there is a strict square root $r$ of $h$. Let $V = {\mathcal V}(r)$. Then $V^2 = {\mathcal V}(r^2) = {\mathcal V}(h) = U$. Let $\mathfrak M$ be a subspace of $\mathfrak H$, let $P_{\mathfrak M}$ be the projection onto it, and, in accordance with Lemma~\ref{lem:orthomodularity-of-objects}, let $p_{\mathfrak M}$ be the endomorphism of $I \oplus I$ such that $P_{\mathfrak M} = {\mathcal V}(p_{\mathfrak M})$. Then
\[ {\mathcal V}(p_{\mathfrak M} \circ h)
\;=\; P_{\mathfrak M} \circ U
\;=\; -P_{\mathfrak M}
\;=\; U \circ P_{\mathfrak M}
\;=\; {\mathcal V}(h \circ p_{\mathfrak M}) \]
and by faithfulness $p_{\mathfrak M} \circ h = h \circ p_{\mathfrak M}$. It follows $p_{\mathfrak M} \circ r = r \circ p_{\mathfrak M}$ by assumption and hence $P_{\mathfrak M} \circ V = V \circ P_{\mathfrak M}$. In particular, each $1$-dimensional subspace is invariant for $V$ and we conclude that $V = \alpha \, \id_{\mathfrak H}$ for some $\alpha \in \mathbb F$. As $V$ is a linear map, $\alpha$ must belong to the centre of $\mathbb F$. We have that $\alpha^2 = -1$ and this is possible only if $\mathbb F = \Complexes$.
\end{proof}

\begin{proposition} \label{prop:V-is-essentially-surjective}
$\mathcal V$ is dagger essentially surjective.
\end{proposition}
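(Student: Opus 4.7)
My plan is to show that every complex Hilbert space $\mathfrak H$ is unitarily isomorphic to ${\mathcal V}(X)$ for a suitable $X \in \C$. The preceding theorem already establishes that ${\mathcal V}(X)$ is a complex Hilbert space for each $X \in \C$, so the codomain of $\mathcal V$ can be taken to be $\Hil{\Complexes}$ and dagger isomorphisms there are exactly the unitaries. The key classical fact I will invoke is that two complex Hilbert spaces are unitarily isomorphic if and only if they have orthonormal bases of the same cardinality.

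Given a complex Hilbert space $\mathfrak H$, let $\lambda$ be the cardinality of an orthonormal basis of $\mathfrak H$. If $\lambda$ is a non-zero finite number $n$, I would take $X = n \, I$, the $n$-fold dagger biproduct of $I$ in $\C$; by Lemma~\ref{lem:ONB}, ${\mathcal V}(X)$ has an orthonormal basis of cardinality $n$. If $\lambda$ is infinite, I would apply Lemma~\ref{lem:infinite-rank-object} to obtain some $X \in \C$ such that ${\mathcal V}(X)$ has an orthonormal basis of cardinality $\lambda$. The case $\lambda = 0$ is handled by $X = 0$. In each case, ${\mathcal V}(X)$ and $\mathfrak H$ are complex Hilbert spaces of the same Hilbert-space dimension, hence unitarily, and therefore dagger, isomorphic in $\Hil{\Complexes}$.

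There is no real obstacle here since the heavy lifting has already been done: Lemma~\ref{lem:infinite-rank-object} supplies the objects of arbitrary infinite rank (via the directed colimit construction of Lemma~\ref{lem:gen-colimit-of-finite-dimensional-subspaces}, which relies on (H2)), Lemma~\ref{lem:ONB} supplies those of finite rank, and the theorem immediately preceding this proposition ensures that ${\mathcal V}(X)$ is a complex Hilbert space. The only substantive ingredient from outside the paper is the classification of complex Hilbert spaces by dimension, which is standard.
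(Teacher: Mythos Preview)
Your proof is correct and follows essentially the same approach as the paper's: produce, for each cardinal $\lambda$, an object $X$ with ${\mathcal V}(X)$ of Hilbert dimension $\lambda$, and then invoke the classification of complex Hilbert spaces by dimension. The only difference is cosmetic: the paper's proof is a single sentence citing Lemma~\ref{lem:infinite-rank-object} (whose statement covers every cardinal, not only infinite ones), whereas you split explicitly into the cases $\lambda = 0$, $\lambda$ finite, and $\lambda$ infinite.
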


\begin{proof}
By Lemma~\ref{lem:infinite-rank-object}, there is for any cardinal number $\lambda$ an $X \in \C$ such that ${\mathcal V}(X)$ has (Hilbert) dimension $\lambda$.
\end{proof}

\begin{lemma} \label{lem:dagger-mono-between-any-pair}
For any objects $X, Y \in \C$, there is a dagger monomorphism from $X$ to $Y$ or from $Y$ to $X$.
\end{lemma}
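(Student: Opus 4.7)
The plan is to reduce the statement to a fact about Hilbert spaces---namely, that between any two of them there exists an isometric embedding in at least one direction---and then to lift such an isometry back to $\C$ using the colimit description of objects afforded by Lemma~\ref{lem:colimit-of-finite-dimensional-subspaces}.

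First I would dispose of the degenerate cases: if $X = 0$, then the unique morphism $0 \to Y$ is the zero morphism and is a dagger monomorphism, since $0_{0,Y}\adj \circ 0_{0,Y} = 0_{0,0} = \id_0$; the case $Y = 0$ is symmetric. So assume both $X$ and $Y$ are non-zero. By Lemma~\ref{lem:uniformity}(iii), choose orthonormal bases $E_X = \{e_\kappa : \kappa \in \Lambda_X\}$ of $\mathcal V(X)$ and $E_Y = \{f_\lambda : \lambda \in \Lambda_Y\}$ of $\mathcal V(Y)$. Since the cardinals $|\Lambda_X|$ and $|\Lambda_Y|$ are comparable, I may assume without loss of generality that $|\Lambda_X| \leq |\Lambda_Y|$, and I fix an injection $\iota \colon \Lambda_X \hookrightarrow \Lambda_Y$. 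Then $\{f_{\iota(\kappa)} : \kappa \in \Lambda_X\}$ is a (possibly incomplete) orthonormal family in $\mathcal V(Y)$, indexed by the same set as $E_X$.

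The crucial step is then to apply Lemma~\ref{lem:colimit-of-finite-dimensional-subspaces} to $X$ with the basis $E_X$. This yields the directed poset $\mathcal F$ of non-empty finite subsets of $\Lambda_X$, a diagram $\mathbf D = \big(A_F \xlongrightarrow[]{k_{F \subseteq G}} A_G\big)_{F,G \in \mathcal F}$ in $\Cdm$, and a colimit cocone $\big(A_F \xlongrightarrow[]{g_F} X\big)_{F \in \mathcal F}$ in $\Cdm$. For each $F = \{\kappa_1, \ldots, \kappa_n\} \in \mathcal F$, set $g'_F = [f_{\iota(\kappa_1)}, \ldots, f_{\iota(\kappa_n)}] \colon A_F \to Y$; by Lemma~\ref{lem:dagger-mono} each $g'_F$ is a dagger monomorphism. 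Pre-composition with the biproduct inclusions gives the compatibility $g'_G \circ k_{F \subseteq G} = g'_F$ whenever $F \subseteq G$, so $(g'_F)_{F \in \mathcal F}$ is a cocone of $\mathbf D$ in $\Cdm$. The universal property of the colimit then produces a unique dagger monomorphism $h \colon X \to Y$ satisfying $h \circ g_F = g'_F$ for all $F$.

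I do not anticipate a serious obstacle. The only point to watch is that the colimit in Lemma~\ref{lem:colimit-of-finite-dimensional-subspaces} is taken inside the subcategory $\Cdm$, which is what forces the mediating morphism $h$ to lie in $\Cdm$, i.e., to be a dagger monomorphism; in $\C$ itself the universal property would only supply a morphism, not necessarily a dagger one. The remaining verifications (cocone compatibility and orthonormality of $\{f_{\iota(\kappa)}\}$) are formally identical to those already carried out in the preceding lemmas.
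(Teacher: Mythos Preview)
Your proof is correct and follows essentially the same approach as the paper's: build the colimit presentation of $X$ from an orthonormal basis via Lemma~\ref{lem:colimit-of-finite-dimensional-subspaces}, then produce a compatible cocone into $Y$ from a matching orthonormal family and invoke the universal property in $\Cdm$. You are slightly more careful than the paper in explicitly disposing of the case $X=0$ (which the paper omits, although Lemma~\ref{lem:colimit-of-finite-dimensional-subspaces} requires $X$ non-zero), and you phrase the cardinality comparison via an injection $\iota$ rather than assuming $\Lambda_X\subseteq\Lambda_Y$, but these are cosmetic differences.
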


\begin{proof}
Let ${\mathcal V}(X)$ have the orthonormal basis $\{ I_\kappa \xlongrightarrow[]{e_\kappa} X \;\colon\; \kappa \in \Lambda_X \}$, let ${\mathcal V}(Y)$ have the orthonormal basis $\{ I_\kappa \xlongrightarrow[]{f_\kappa} Y \;\colon\; \kappa \in \Lambda_Y \}$, and assume that $\Lambda_X \subseteq \Lambda_Y$.

Let $\mathcal F$ be the set of non-empty finite subsets of $\Lambda_X$, partially ordered by inclusion. For $F = \{ \kappa_1, \ldots, \kappa_k \} \in {\mathcal F}$, let $A_F = I_{\kappa_1} \oplus \ldots \oplus I_{\kappa_k}$ and $g_F = [e_{\kappa_1}, \ldots, e_{\kappa_k}]$. In accordance with Lemma~\ref{lem:colimit-of-finite-dimensional-subspaces}, let $\mathbf D$ be the diagram that has in $\Cdm$ the colimit $\big(A_F \xlongrightarrow[]{g_F} X\big)_{F \in {\mathcal F}}$.

For $F = \{ \kappa_1, \ldots, \kappa_k \} \in {\mathcal F}$, let moreover $m_F = [f_{\kappa_1}, \ldots, f_{\kappa_k}] \colon A_F \to Y$. Then $\big(A_F \xlongrightarrow[]{m_F} Y\big)_{F \in {\mathcal F}}$ is a cocone of $\mathbf D$ in $\Cdm$. We conclude that there is a dagger monomorphism $X \to Y$.
\end{proof}

In order to establish fullness we have to show that $\mathcal V$ captures all bounded operators. For Hilbert spaces of dimension at least $2$, a theorem of Airat M.~Bikchentaev provides exactly what we need to this end. Indeed, we already know that all projections are in the image of $\mathcal V$. By Bikchentaev's Theorem, every bounded operator is a sum of products of projections.

\begin{theorem} \label{thm:Bikchentaev}
Let $\mathfrak H$ be the complex Hilbert space of dimension $\geq 2$. Then the ring of bounded operators of $\mathfrak H$ is generated by the projections.
\end{theorem}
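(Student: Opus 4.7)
This statement is Bikchentaev's Theorem, which the paper imports from \cite{Bik} as a black box. Were I to reconstruct its proof, my plan would proceed in three stages.

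First, I would reduce to showing that (i) every complex scalar multiple of the identity lies in the subring $R \subseteq B(\mathfrak H)$ generated by the projections, and (ii) every rank-one operator $|u\rangle\langle v|$ lies in $R$. The hypothesis $\dim \mathfrak H \geq 2$ enters decisively in (i): the rank-one projections on a two-dimensional subspace form a continuous two-parameter family whose matrix entries sweep out all of $\Complexes$, so suitable finite $\mathbb Z$-linear combinations of products of such projections and their complements $\id - P$ produce operators with prescribed complex entries. A careful choice then isolates $\lambda \cdot \id$ for any $\lambda \in \Complexes$.

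Second, for rank-one operators I would exploit the identity $P_u P_v = \langle v, u\rangle \cdot |u\rangle\langle v|$: once (i) supplies the scalar $\langle v, u\rangle^{-1} \cdot \id \in R$, one extracts $|u\rangle\langle v|$ itself. Summing then yields every finite-rank operator inside $R$.

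The main obstacle, and the genuine depth of Bikchentaev's work, is the third stage: extending to an arbitrary bounded operator on an infinite-dimensional $\mathfrak H$. Since $R$ is purely algebraic, no norm- or strong-limit argument is admissible; each $T \in B(\mathfrak H)$ must be expressed as a finite sum of finite products of projections. Constructing such an explicit algebraic identity is the heart of the matter, and at this point I would defer to the construction in \cite{Bik}.
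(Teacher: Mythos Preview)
Your proposal is correct and matches the paper's approach exactly: the paper's entire proof is the single line ``See \cite[Section 3, Theorem]{Bik},'' treating the result as a black box. Your outlined reconstruction goes beyond what the paper offers and is a reasonable sketch, though the paper itself attempts none of it.
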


\begin{proof}
See \cite[Section 3, Theorem]{Bik}.
\end{proof}

\begin{proposition} \label{prop:V-is-full}
$\mathcal V$ is full.
\end{proposition}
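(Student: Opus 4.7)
The plan is to reduce the lifting problem to endomorphisms and then transport Bikchentaev's Theorem~\ref{thm:Bikchentaev} through $\mathcal V$ using the projection-lifting from Lemma~\ref{lem:orthomodularity-of-objects}.

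First I would reduce to the endomorphism case. Given a bounded operator $T \colon \mathcal V(X) \to \mathcal V(Y)$, Lemma~\ref{lem:dagger-mono-between-any-pair} supplies a dagger monomorphism in at least one direction. If $h \colon X \to Y$ is dagger mono, then $\mathcal V(h)\adj \circ \mathcal V(h) = \id$, so a preimage $g \colon Y \to Y$ of $T \circ \mathcal V(h)\adj$ under $\mathcal V$ yields $f = g \circ h$ with $\mathcal V(f) = T$. Dually, if $h \colon Y \to X$ is dagger mono, a preimage $g \colon X \to X$ of $\mathcal V(h) \circ T$ yields $f = h\adj \circ g$. So it suffices to lift every bounded endomorphism of $\mathcal V(X)$ to an endomorphism of $X$.

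The trivial cases should be dispatched separately. For $X = 0$ there is nothing to show. For $X$ of rank $1$, which we may identify with $I$ by Lemma~\ref{lem:ONB}, every bounded endomorphism of $\mathcal V(I) \cong \Complexes$ is multiplication by some $\alpha \in \Complexes = \mathbb F$; by the definition $\alpha \cdot u = u \circ \alpha$ of the scalar action, this endomorphism coincides with $\mathcal V(\alpha)$, so the lift is immediate.

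The substantive step is the case where $\mathcal V(X)$ has dimension at least two. Here Theorem~\ref{thm:Bikchentaev} writes $T$ as a finite sum of finite products of orthogonal projections, say $T = \sum_i P_{i,1} \circ \cdots \circ P_{i,k_i}$. By Lemma~\ref{lem:orthomodularity-of-objects} each $P_{i,j}$ is of the form $\mathcal V(p_{i,j})$ for an endomorphism $p_{i,j}$ of $X$, and because $\mathcal V$ is a dagger functor preserving addition (Theorem~\ref{thm:Hermitian-structure-H}), the morphism $f = \sum_i p_{i,1} \circ \cdots \circ p_{i,k_i}$ of $\C$ satisfies $\mathcal V(f) = T$. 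I do not expect serious obstacles: the reduction is mechanical, the low-rank cases are immediate, and the main case is a direct transport of Bikchentaev's generation statement through the semiadditive functor $\mathcal V$, for which the preceding machinery has been assembled precisely.
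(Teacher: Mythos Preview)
Your proposal is correct and follows essentially the same approach as the paper's proof: both reduce the general lifting problem to endomorphisms via the dagger monomorphisms supplied by Lemma~\ref{lem:dagger-mono-between-any-pair}, and then invoke Bikchentaev's Theorem together with the projection-lifting of Lemma~\ref{lem:orthomodularity-of-objects} and the additivity of $\mathcal V$ from Theorem~\ref{thm:Hermitian-structure-H}. Your version is more explicit about the reduction step and the low-rank edge cases (which the paper subsumes under ``it readily follows''), but the structure and key ingredients are the same.
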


\begin{proof}
For any $\mathfrak H$ in the image of $\mathcal V$ such that $\dim \mathfrak H \geq 2$, let $\E(\mathfrak H)$ be the set of all endomorphisms of $\mathfrak H$ in the image of $\mathcal V$. As $\mathcal V$ preserves the adjoint, each $\E(\mathfrak H)$ consists of bounded maps only. By Lemma~\ref{lem:orthomodularity-of-objects}, $\E(\mathfrak H)$ contains all projections; as $\mathcal V$ preserves $+$, $\E(\mathfrak H)$ is closed under addition; and $\E(\mathfrak H)$ is closed under composition. Hence, by Theorem~\ref{thm:Bikchentaev}, $\E(\mathfrak H)$ consists of all bounded endomorphisms of $\mathfrak H$.

Let now $\mathfrak H_1$ and $\mathfrak H_2$ arbitrary Hilbert spaces in the image of $\mathcal V$ such that $\dim \mathfrak H_1 \leq \dim \mathfrak H_2$. Then there is by Lemmas~\ref{lem:dagger-mono-between-any-pair} and~\ref{lem:dagger-mono-induces-isometry}(i) an isometry from $\mathfrak H_1$ to $\mathfrak H_2$ in the image of $\mathcal V$. It readily follows that the image of $\mathcal V$ contains all bounded maps between any two spaces.
\end{proof}

\begin{theorem}
$\C$ is unitarily dagger equivalent to $\Hil{\Complexes}$.
\end{theorem}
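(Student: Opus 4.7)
The plan is to assemble the results proved over the course of Section~\ref{sec:main} and appeal to the characterisation of unitary dagger equivalences recalled at the end of Section~\ref{sec:dagger-categories}. Concretely, by \cite[Lemma~V.1]{Vic}, it suffices to check that a dagger functor $\mathcal F \colon \C_1 \to \C_2$ is full, faithful, and dagger essentially surjective in order to conclude that it establishes a unitary dagger equivalence. So I would take as the candidate functor $\mathcal V$ introduced in Definition~\ref{def:linear-structure-H}.

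First, I would argue that $\mathcal V$ lands in $\Hil{\Complexes}$. By Theorem~\ref{thm:Hermitian-structure-H}, $\mathcal V$ is a dagger functor from $\C$ to $\Her{\mathbb F}$, where $\mathbb F = \C(I,I)\op$. The preceding theorem (which establishes $\mathbb F = \Complexes$ and that each ${\mathcal V}(X)$ is a Hilbert space) upgrades the codomain: every ${\mathcal V}(X)$ is a complex Hilbert space, and for a morphism $f$ of $\C$ the adjointable linear map ${\mathcal V}(f)$ between complex Hilbert spaces is automatically bounded, since adjointability and boundedness coincide in $\Hil{\Complexes}$. Hence $\mathcal V$ factors through the inclusion $\Hil{\Complexes} \hookrightarrow \Her{\Complexes}$ as a dagger functor $\mathcal V \colon \C \to \Hil{\Complexes}$.

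Next I would combine the three hypotheses of the Vicary criterion: faithfulness is Proposition~\ref{prop:V-is-faithful}, dagger essential surjectivity is Proposition~\ref{prop:V-is-essentially-surjective} (via Lemma~\ref{lem:infinite-rank-object}, noting that the dimension of a Hilbert space uniquely determines it up to unitary isomorphism), and fullness is Proposition~\ref{prop:V-is-full}. Together these three facts feed directly into \cite[Lemma~V.1]{Vic} and deliver the desired unitary dagger equivalence.

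There is no real obstacle left at this stage; the substantive work was the construction of $\mathcal V$ together with the application of Sol\`er's Theorem and Bikchentaev's Theorem earlier in the section. The only minor point deserving an explicit remark is that Proposition~\ref{prop:V-is-essentially-surjective} as stated yields, for each cardinal $\lambda$, an $X \in \C$ with ${\mathcal V}(X)$ of Hilbert dimension $\lambda$; since complex Hilbert spaces are classified up to unitary isomorphism by their Hilbert-space dimension, this indeed covers every object of $\Hil{\Complexes}$ up to dagger isomorphism.
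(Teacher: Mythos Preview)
Your proposal is correct and follows essentially the same approach as the paper: the paper's own proof simply cites Propositions~\ref{prop:V-is-faithful}, \ref{prop:V-is-full}, and~\ref{prop:V-is-essentially-surjective} to see that $\mathcal V \colon \C \to \Hil{\Complexes}$ is faithful, full, and dagger essentially surjective, and then invokes \cite[Lemma~V.1]{Vic}. Your additional remarks (that adjointability implies boundedness so $\mathcal V$ indeed lands in $\Hil{\Complexes}$, and that Hilbert spaces are classified up to unitary isomorphism by dimension) merely make explicit what the paper leaves implicit.
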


\begin{proof}
By Propositions~\ref{prop:V-is-faithful},~\ref{prop:V-is-full}, and~\ref{prop:V-is-essentially-surjective}, ${\mathcal V} \colon \C \to \Hil{\Complexes}$ is faithful, full, and dagger essentially surjective. Hence the assertion follows from \cite[Lemma~V.1]{Vic}.
\end{proof}

{\bf Acknowledgements.} This research was funded in part by the Austrian Science Fund (FWF) 10.55776/PIN5424624 and the Czech Science Foundation (GA\v CR) 25-20013L.

\end{document}